\definecolor{darkgreen}{rgb}{0,0.45,0}
\newtheorem{thm}{Theorem}[section]
\newtheorem{lem}[thm]{Lemma}
\crefname{lem}{Lemma}{Lemmas}
\newtheorem{prop}[thm]{Proposition}
\crefname{prop}{Proposition}{Propositions}
\newtheorem{cor}[thm]{Corollary}
\theoremstyle{definition}
\newtheorem{defn}[thm]{Definition}
\newtheorem{rem}[thm]{Remark}
\newtheorem{ex}[thm]{Example}
\newcommand{\lra}       {\longrightarrow}
\newcommand{\llra}[1]   {\stackrel{#1}{\lra}}  %
\mathchardef\usc="2D
\newcommand{\islocal}[1]{\mathsf{is\usc{}local}_{#1}}
\newcommand{\fib}{\mathsf{fib}}
\newcommand{\unit}{\mathbf{1}}
\newcommand{\susp}{\mathsf{\Sigma}}
\newcommand{\N}{\mathbb{N}}
\newcommand{\Z}{\mathbb{Z}}
\newcommand{\ap}{\mathsf{ap}}
\newcommand{\UU}{\mathcal{U}}
\newcommand{\Prop}{\mathsf{Prop}}
\newcommand{\refl}{\mathsf{refl}}
\newcommand{\id}{\mathsf{id}}
\newcommand{\pr}{\mathsf{pr}}
\newcommand{\tot}{\mathsf{tot}}
\newcommand{\defeq}{:\equiv}
\newcommand{\dfn}[1]{\textbf{\boldmath{#1}}}
\newcommand{\sbt}{{\,\begin{picture}(-1,1)(-1,-3)\circle*{3}\end{picture}\ } }
\newcommand{\pto}{\to_{\sbt}}
\newcommand{\IsEquiv}{\mathsf{IsEquiv}}
\newcommand{\cL}{\mathcal{L}}
\newcommand{\cR}{\mathcal{R}}
\newcommand{\sSet}{\textup{sSet}}
\newcommand{\Kan}{\textup{Kan}}
\newcommand{\LEM}{\textup{LEM}}
\newcommand{\AC}{\textup{AC}}
\DeclareMathOperator{\Hom}{Hom}
\begin{document}

\title{Non-accessible localizations}
\author{J. Daniel Christensen}
\email{jdc@uwo.ca}
\address{University of Western Ontario, London, Ontario, Canada}

\keywords{reflective subuniverse, localization, accessible, non-accessible,
orthogonal factorization system, homotopy type theory.}

\subjclass{18N55, 03B38, 03G30, 18N60 (Primary), 18E35, 55P60, 55U35 (Secondary).}

\date{May 28, 2024}

\begin{abstract}
In a 2005 paper, Casacuberta, Scevenels and Smith construct a
homotopy idempotent functor $E$ on the category of simplicial sets
with the property that whether it can be expressed as localization
with respect to a map $f$ is independent of the ZFC axioms.
We show that this construction can be carried out in homotopy type theory.
More precisely, we give a general method of associating to a suitable
(possibly large) family of maps, a reflective subuniverse of any
universe $\UU$.
When specialized to an appropriate family, this produces a localization
which when interpreted in the $\infty$-topos of spaces agrees with the
localization corresponding to $E$.
Our approach generalizes the approach of~\cite{CSS} in two ways.
First, by working in homotopy type theory, our construction can be
interpreted in any $\infty$-topos.
Second, while the local objects produced by~\cite{CSS} are always
$1$-types, our construction can produce $n$-types, for any $n$.
This is new, even in the $\infty$-topos of spaces.
In addition, by making use of universes, our proof is very direct.

Along the way, we prove many results about ``small'' types that
are of independent interest.
As an application, we give a new proof that separated localizations exist.
We also give results that say when a localization with respect to
a family of maps can be presented as localization with
respect to a single map,
and show that the simplicial model satisfies a strong form of the
axiom of choice which implies that sets cover and that the law of
excluded middle holds.
\end{abstract}

\maketitle

\section{Introduction}

In topology, the study of reflective subcategories (usually called ``localizations'')
has a long history, beginning with work of Sullivan~\cite{Sul} and Bousfield~\cite{Bou}
for topological spaces.
This framework has played a fundamental organizing role in algebraic
topology in the intervening decades, influencing and leading to the solution
of many central conjectures in the field.
More recently, the theory of reflective subcategories was adapted to the
setting of $\infty$-categories by Lurie~\cite{Lur}.
The key ingredient of a localization $L$ is a function sending an object $X$
to an object $LX$, with certain properties.

Independently, logicians and philosophers have considered the notion
of modalities in logic, which allow one to qualify statements to
express \emph{possibility}, \emph{necessity}, and other attributes
such as \emph{temporal} modalities.
These modalities are expressed by applying a modal operator $\Diamond$ to
a proposition $P$ to produce a new proposition $\Diamond P$, with
certain properties.

Homotopy type theory is a framework that can be used to study $\infty$-toposes.
Thanks to recent work~\cite{dBB,dB,KL,LS,Shu}, we know that
anything proved in homotopy type theory is true in any $\infty$-topos%
\footnote{The initiality and semantics of higher inductive types still need to be fully worked out.}.
The types in homotopy type theory are interpreted as \emph{objects} of an $\infty$-topos,
and are also used to encode \emph{propositions} that one may wish to prove.
It turns out that the notion of a \emph{reflective subuniverse} in homotopy type theory
simultaneously encodes the idea of a localization of an $\infty$-topos%
\footnote{More precisely, a reflective subuniverse %
in homotopy type theory
corresponds to a family of reflective subcategories of each slice category,
compatible with pullback (see~\cite[Appendix~A]{RSS} and~\cite{Ver}).}
and common modalities studied in logic~\cite{Cor}.

The foundations for the theory of reflective subuniverses
in homotopy type theory were developed in~\cite{RSS}.
We briefly describe the background from there needed to explain our results.

\subsection*{Background}

All of our results are stated and proved within the framework of
homotopy type theory.
We follow the conventions and notation of~\cite{Uni}.
Most of the results have been formalized
using the Coq HoTT library~\cite{BGL+,HoTT}
and are available at~\cite{Chr}; see \cref{se:formalization} for details.

Let $\UU$ be a univalent universe.
Recall from \cite{RSS,Uni} that a \dfn{reflective subuniverse}
$L$ of $\UU$ consists of a subuniverse $\islocal{L} : \UU \to \Prop_{\UU}$,
a function $L : \UU \to \UU$,
and a \dfn{localization} $\eta_X : X \to LX$ for each $X : \UU$.
Being a localization means that $LX$ is $L$-local and $\eta_X$ is
initial among maps whose codomain is local.

Consider a family $f : \prod_{i:I} (A_i \to B_i)$ of maps.
A type $X$ is \dfn{$f$-local} if precomposition with $f_i$
\[
  X^{B_i} \to X^{A_i}
\]
is an equivalence for each $i : I$.
A key result in the subject is~\cite[Theorem~2.18]{RSS}, which states
that if the family $f$ is in $\UU$ in the sense that $I : \UU$ and all $A_i, B_i : \UU$,
then the subuniverse of $f$-local types in $\UU$ is reflective.
A reflective subuniverse that is presented in this way is called \dfn{accessible}.
(In classical topology,~\cite{Far} provides the foundations for such
accessible localizations.)

\subsection*{Motivation}

Given a reflective subuniverse $L$ of $\UU$, we can define a
new subuniverse $L'$ of $\UU$ consisting of the types whose
identity types are $L$-local.
In~\cite{CORS}, it is shown that $L'$ is again reflective.
In the proof, care was taken to avoid the assumption
that $L$ is accessible.

This raises the question of whether there are interesting
non-accessible reflective subuniverses.
In topology, work of~\cite{CSS} (building on~\cite{CGT,CRS}) shows that there exists a
reflective subcategory of the $\infty$-topos of spaces whose
accessibility is independent of ZFC.
In their example, every local space is a 1-type.
Their work is particularly interesting because it reveals a close connection to
large cardinal axioms.  They show that Vop\v{e}nka's principle implies that
all localizations are accessible, which in turn implies the existence of
measurable cardinals.
The only example I am aware of in type theory is the double-negation
modality, which is a reflection onto the subuniverse of stable
propositions (those $P$ such that $P \to \lnot \lnot P$ is an equivalence).
However, this is accessible if one assumes propositional resizing, which
holds in models associated to $\infty$-toposes~\cite[Proposition~11.3]{Shu}.

\subsection*{Main result}
The goal of the present work is to give a construction of a
reflective subuniverse in homotopy type theory which when interpreted
in spaces reproduces the example of~\cite{CSS} whose accessibility is
independent of ZFC.
Our result is in fact more general. First, it gives a result
in any model of homotopy type theory (e.g., in any $\infty$-topos).
Second, it also produces examples at higher truncation levels.  
Moreover, by working within homotopy type theory, we are able to give a simpler proof,
not at all following the classical proof.
In future work, we hope to give a proof within homotopy type theory
that our theorem can produce a reflective subuniverse that cannot
be proven to be accessible (or even merely accessible).

The main result is:

\theoremstyle{plain}
\newtheorem*{th:non-accessible}{Theorem \ref{th:non-accessible}}
\begin{th:non-accessible}
Let $n \geq -1$ and
let $f : \prod_{i:I} (A_i \to B_i)$ be a family of $(n-1)$-connected maps.
Then the subuniverse of $n$-truncated $f$-local types in $\UU$ is reflective.
\end{th:non-accessible}

The key point is that none of $I$, $A_i$ or $B_i$ are required to
be in $\UU$.  They can be in any other universe.
When $n = 1$, this theorem reproduces the classical result of~\cite{CSS}.
We explain this in \cref{se:pastwork}.
When $n > 1$, this result appears to be new, even for the $\infty$-topos
of spaces.
(We recall the definition of connectedness in \cref{de:connected}.)

In order to prove the main theorem, we first prove results that give
conditions implying that a type is equivalent to a type in $\UU$.
Say that a type $X$ is \dfn{small} if it is equivalent to a type in $\UU$.
More precisely, $X$ is \dfn{small} if $\sum_{X' : \UU} (X' \simeq X)$.
Note that this is a mere proposition, by univalence.
Define $X$ to be \dfn{$0$-locally small} if it is small.
Define $X$ to be \dfn{$(n+1)$-locally small} if for all $x, x' : X$, $x = x'$ is
$n$-locally small.
These are also mere propositions.

Using his join construction~\cite{R}, Rijke showed that if $A$ is small,
$X$ is 1-locally small and $A \to X$ is $(-1)$-connected (surjective on $\pi_0$),
then $X$ is small.
Rijke's result is the $n=0$ case of the following result:

\newtheorem*{pr:smallness}{Proposition \ref{pr:smallness}}
\begin{pr:smallness}
Let $n \geq -1$.
If $f : A \to X$ is $(n-1)$-connected, $A$ is small and $X$ is $(n+1)$-locally small,
then $X$ is small.
\end{pr:smallness}

To prove the main result, we consider the extended family $\bar{f}$,
indexed by $I + 1$, which also includes the map $S^{n+1} \to 1$.
The $\bar{f}$-local types are exactly the $n$-truncated types which
are $f$-local.
We show that localization with respect to $\bar{f}$ on a universe $\UU'$
that contains $\UU$, $I$ and all of the types $A_i$ and $B_i$ restricts
to a localization on $\UU$, using the proposition and~\cite[Theorem~3.12]{CORS}.
In addition, in order to adjust the predicate $\islocal{\bar{f}}$ to land in $\UU$,
we need to use \dfn{propositional resizing}, the axiom that every mere proposition
is small.

\subsection*{Organization}
In \cref{se:smallness}, we prove a number of results about small and
locally small types.  These include the proposition above, as well as
other results that are not needed for the main theorem, but may be of
use in other situations.
In \cref{se:non-accessible}, we prove the main theorem giving the
existence of localizations determined by a large family of maps.
We also show that the extended family $\bar{f}$ generates an
orthogonal factorization system on $\UU$.
The short \cref{se:L'} uses some of the earlier results to give
an alternative proof of the existence of $L'$ localizations.
Then in \cref{se:single-map}, we show that under certain assumptions,
an accessible localization can be presented as a localization with respect
to a single map.
We then show that a strong form of the axiom of choice holds in the
simplicial model, which implies the needed assumptions.
In \cref{se:pastwork}, we describe the relationship between our
results and those of~\cite{CSS}.
In \cref{se:formalization}, we briefly describe the formalization
of these results, which is available at~\cite{Chr}.

\subsection*{Acknowledgements}
The author thanks Mathieu Anel, Ulrik Buchholtz, Jonas Frey, Egbert Rijke,
Mike Shulman and Andrew Swan for comments that helped to improve this paper.

\section{Smallness}\label{se:smallness}

In this section, we prove a number of results about small types.
The only results that will be used in the rest of the paper are
\cref{pr:smallness} and \cref{le:truncated-type,le:truncated-family}.
 
We defined ``small'' and ``$n$-locally small'' in the introduction.
Note that these collections are $\Sigma$-closed:  if $Y$ is $n$-locally
small and $A : Y \to \UU'$ is a family of $n$-locally small types in some
universe $\UU'$, then $\sum_{y:Y} A(y)$ is $n$-locally small.
These collections are also clearly closed under equivalence.

We will make frequent use of the concept of connectedness.

\begin{defn}\label{de:connected}
Let $k \geq -2$.
A type $A$ is \dfn{$k$-connected} if its $k$-truncation $\|A\|_k$ is contractible.
A map $f : A \to B$ is \dfn{$k$-connected} if for every $b : B$, the
fibre $\fib_f(b) \defeq \sum_{a : A} \, (f(a) = b)$ is $k$-connected.
\end{defn}

For example, the $(-1)$-connected maps are precisely the surjections,
i.e., the maps that are surjective on $\pi_0$.

Using his join construction~\cite{R}, Rijke showed that if $A$ is small,
$X$ is 1-locally small and $A \to X$ is $(-1)$-connected, then $X$ is small.
We generalize this.

\begin{prop}\label{pr:smallness}
Let $n \geq -1$.
If $f : A \to X$ is $(n-1)$-connected, $A$ is small and $X$ is $(n+1)$-locally small,
then $X$ is small.
\end{prop}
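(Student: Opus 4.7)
The plan is to proceed by induction on $n$. For $n = -1$, a $(-2)$-connected map is an equivalence, so $X \simeq A$ is small, while the hypothesis that $X$ be $0$-locally small is just the conclusion. For $n = 0$, the statement is Rijke's theorem, recalled just before the proposition, which I take as given.

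For the inductive step, suppose $n \geq 1$ and the proposition holds for $n-1$. Given $f : A \to X$ that is $(n-1)$-connected with $A$ small and $X$ being $(n+1)$-locally small, I want to show $X$ is small. Since $n \geq 1$, $f$ is $0$-connected and in particular $(-1)$-connected (i.e., surjective). The strategy is to deduce the conclusion by applying Rijke's $n=0$ case to $f : A \to X$ itself; this is permissible the moment one knows that $X$ is $1$-locally small. So the whole task reduces to proving that $X$ is $1$-locally small.

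To prove this, fix $x, x' : X$; I want $(x = x')$ to be small. Since smallness is a mere proposition (by univalence) and $f$ is surjective, I may choose $a, a' : A$ with $f(a) = x$ and $f(a') = x'$. Then $(x = x') \simeq (f(a) = f(a'))$, so it suffices to show that $f(a) = f(a')$ is small. Consider the induced map on path spaces
\[
  \ap(f) : (a = a') \to (f(a) = f(a')).
\]
By the standard identity-type characterization of connectedness, $\ap(f)$ is $(n-2)$-connected. Its domain is small (as $A$ is), and its codomain is $n$-locally small, directly unfolding the hypothesis that $X$ is $(n+1)$-locally small. The inductive hypothesis applied to $\ap(f)$ then yields that $f(a) = f(a')$ is small, completing the reduction.

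The main obstacle is identifying the correct inductive setup. The shift in connectedness that arises when passing from $f$ to $\ap(f)$ matches exactly the decrement in the local-smallness hypothesis, so the induction threads smoothly; but one has to notice that Rijke's surjective case plays two roles simultaneously, as the base case $n=0$ and as the final step that assembles $1$-local smallness into actual smallness. Once this structure is in place, the supporting facts—that an $(n-1)$-connected map induces an $(n-2)$-connected map on each path space, that surjectivity lets one merely lift along $f$ into a propositional target, and that smallness is a mere proposition—carry the argument through without additional difficulty.
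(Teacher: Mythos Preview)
Your proof is correct and follows essentially the same approach as the paper's. The only cosmetic difference is that you treat $n=-1$ and $n=0$ as separate base cases and begin the induction at $n \geq 1$, whereas the paper takes only $n=-1$ as the base case and folds $n=0$ into the inductive step; the substance of the argument---reduce to $1$-local smallness via the inductive hypothesis applied to $\ap(f)$, then invoke Rijke's join construction---is identical.
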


\begin{proof}
When $n = -1$, this is vacuous, since $0$-locally small means the same as small.

Now let $n \equiv m + 1$ for $m \geq -1$.
Assume $f : A \to X$ is $(n-1)$-connected, $A$ is small and $X$ is $(n+1)$-locally small.
We'll show below that $X$ is 1-locally small.
Then since $(n-1)$-connected implies $(-1)$-connected, the inductive step will
follow from the join construction.

Let $x, x' : X$.  We need to show that $x = x'$ is small.
Since being small is a mere proposition and $f$ is surjective, we can assume that $x \equiv f(a)$
and $x' \equiv f(a')$ for some $a, a' : A$.
We'll show that $f(a) = f(a')$ is small.
We have $\ap f : a = a' \to f(a) = f(a')$.
Note that $a = a'$ is small and this map is $(n-2)$-connected.
By assumption, $f(a) = f(a')$ is $n$-locally small.
So, by the induction hypothesis, $f(a) = f(a')$ is small.
\end{proof}

Next we give some results that will be used to prove a dual version
of~\cref{pr:smallness}.

\begin{lem}\label{le:truncated-type}
Let $n \geq -1$.
If $X$ is $n$-truncated, then $X$ is $(n+1)$-locally small.
\end{lem}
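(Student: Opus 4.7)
The plan is to prove this by induction on $n \geq -1$, using the recursive nature of both ``$n$-truncated'' and ``$n$-locally small''.

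In the base case $n = -1$, the hypothesis is that $X$ is a mere proposition, and the conclusion is that $X$ is $0$-locally small, i.e., small. This is precisely propositional resizing, which the paper has flagged (in the main result discussion) as an assumption that will be invoked when needed. So the base case amounts to a direct appeal to propositional resizing.

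For the inductive step, suppose the claim holds for $n$, and let $X$ be $(n+1)$-truncated. Unfolding the definition of $(n+2)$-locally small, it suffices to show that for every $x, x' : X$ the identity type $x = x'$ is $(n+1)$-locally small. Since $X$ is $(n+1)$-truncated, each such identity type $x = x'$ is $n$-truncated, so the inductive hypothesis (applied at truncation level $n$) yields that $x = x'$ is $(n+1)$-locally small, as required.

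I expect no real obstacle beyond bookkeeping: the entire argument is structural induction paired with one invocation of propositional resizing at the bottom. The only subtlety worth flagging is that without propositional resizing the base case genuinely fails (a mere proposition in a larger universe need not be small), so the dependence of this lemma on resizing should be recorded explicitly, consistent with the role of resizing elsewhere in the paper.
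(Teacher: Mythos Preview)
Your proof is correct and follows essentially the same approach as the paper: induction on $n$, with the base case $n=-1$ reducing to propositional resizing, and the inductive step unfolding both definitions to apply the hypothesis to identity types. Your indexing in the inductive step is arguably cleaner than the paper's, but the argument is the same.
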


\begin{proof}
We prove this by induction on $n$.
When $n = -1$, this is a restatement of propositional resizing, which we are assuming holds.

Now let $n \equiv m + 1$ with $m \geq -1$, and assume that $X$ is $(m+1)$-truncated.
This means that for each $x, y : X$, the identity type $x = y$ is $m$-truncated.
By the induction hypothesis applied to $x = y$, we see that each $x = y$ is $m$-locally small.
That is, $X$ is $(m+1)$-locally small, as required.
\end{proof}

\begin{lem}\label{le:truncated-family}
Let $n \geq -1$.
If $f : X \to Y$ is $n$-truncated and $Y$ is $(n+1)$-locally small,
then $X$ is $(n+1)$-locally small.
\end{lem}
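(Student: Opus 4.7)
The plan is to realise $X$ as a $\Sigma$-type over $Y$ whose fibres are the homotopy fibres of $f$, and then combine the $\Sigma$-closure of $(n+1)$-local smallness (noted just after the definitions in \cref{se:smallness}) with the preceding \cref{le:truncated-type}.

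First, I would invoke the standard equivalence
\[
  X \simeq \sum_{y : Y} \fib_f(y),
\]
valid for any map $f : X \to Y$. Next, I would check the two ingredients required for $\Sigma$-closure at level $n+1$: the base $Y$ is $(n+1)$-locally small by hypothesis, while each fibre $\fib_f(y)$ is $n$-truncated because $f$ is an $n$-truncated map, and hence $(n+1)$-locally small by \cref{le:truncated-type}. Applying $\Sigma$-closure with $n$ replaced by $n+1$ then shows that $\sum_{y:Y} \fib_f(y)$ is $(n+1)$-locally small, and since $(n+1)$-local smallness is preserved under equivalence, so is $X$.

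I do not expect a substantive obstacle; the argument merely assembles results that are already in place. The only point that deserves attention is the case $n = -1$: there $f$ is an embedding, $Y$ is small, and the fibres are mere propositions, so the appeal to \cref{le:truncated-type} is an implicit use of propositional resizing. If a non-$\Sigma$-type argument were preferred, one could instead induct on $n$ using the map $\ap f : (x = x') \to (f(x) = f(x'))$, which is $(n-1)$-truncated whenever $f$ is $n$-truncated; but the $\Sigma$-type formulation above is cleaner and sidesteps any induction.
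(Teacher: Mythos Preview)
Your proposal is correct and follows essentially the same route as the paper: realise $X$ as $\sum_{y:Y}\fib_f(y)$, apply \cref{le:truncated-type} to the fibres, and conclude via $\Sigma$-closure and equivalence-invariance of $(n+1)$-local smallness. The paper's proof is the same three-line argument, without the additional commentary on the $n=-1$ case or the alternative inductive approach.
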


Recall that a map is said to be $n$-truncated if all of its fibres are $n$-truncated.
For example, a $(-1)$-truncated map is exactly an embedding, so the $n=-1$
case of the lemma says that every subtype of a small type is small.

\begin{proof}
Note that $X$ is equivalent to $\sum_{y:Y} \fib_f(y)$.
The fibres are $n$-truncated by assumption, so by \cref{le:truncated-type} they
are $(n+1)$-locally small.
The claim follows from the fact that the $(n+1)$-locally small types are $\Sigma$-closed.
\end{proof}

\begin{lem}\label{le:dual-smallness}
Let $n \geq -1$.
If $f : X \to Y$ is $n$-truncated, $Y$ is $(n+1)$-locally small and
$X$ is $n$-connected, then $X$ is small.
\end{lem}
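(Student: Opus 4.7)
The plan is to reduce this directly to \cref{pr:smallness} by finding a suitable small type mapping into $X$ via an $(n-1)$-connected map. The extra ingredient compared to the hypotheses of \cref{pr:smallness} is that here we get ``for free'' (from $n$-connectedness) a pointed structure on $X$ that will serve to define this map.

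First I would use \cref{le:truncated-family} applied to the given map $f : X \to Y$: since $f$ is $n$-truncated and $Y$ is $(n+1)$-locally small, we immediately conclude that $X$ is $(n+1)$-locally small. This is the only place the map $f$ and the hypothesis on $Y$ are used; afterwards, the map $f$ can be discarded.

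Next, since $n \geq -1$, an $n$-connected type is in particular $(-1)$-connected, hence merely inhabited. Because smallness is a mere proposition, we may assume we are given an actual point $x_0 : X$. Consider the constant map $c_{x_0} : \mathbf{1} \to X$. Its fibre over $x : X$ is equivalent to the identity type $x_0 = x$, and since $X$ is $n$-connected, each such identity type is $(n-1)$-connected. Therefore $c_{x_0}$ is an $(n-1)$-connected map with small domain $\mathbf{1}$.

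Now apply \cref{pr:smallness} with $A \defeq \mathbf{1}$ and the map $c_{x_0} : \mathbf{1} \to X$: the map is $(n-1)$-connected, the domain is small, and $X$ is $(n+1)$-locally small by the first step. The conclusion is that $X$ is small. There is no serious obstacle: the argument is a short assembly of \cref{le:truncated-family} and \cref{pr:smallness}, with the only mildly delicate point being the legitimate passage from ``$X$ merely inhabited'' to ``$X$ has a chosen point $x_0$'', which is valid because the goal (smallness of $X$) is a mere proposition.
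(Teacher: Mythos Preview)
Your proof is correct and follows essentially the same route as the paper's: apply \cref{le:truncated-family} to get that $X$ is $(n+1)$-locally small, use mere inhabitation of $X$ to choose a point, and then apply \cref{pr:smallness} to the resulting $(n-1)$-connected map $\mathbf{1}\to X$. The only cosmetic difference is that the paper cites \cite[Lemma~7.5.11]{Uni} for the $(n-1)$-connectedness of the point inclusion, while you spell out the fibre computation directly.
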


\begin{proof}
By \cref{le:truncated-family}, $X$ is $(n+1)$-locally small.
Since $\|X\|_n \simeq 1$ and $n \geq -1$ we have that $\|X\|_{-1} \simeq 1$.
Since we are proving a mere proposition, we can assume that we have $x : X$.
Consider the map $1 \to X$ that selects the point $x$.
Since $X$ is $n$-connected, this map is $(n-1)$-connected~\cite[Lemma~7.5.11]{Uni}.
So by \cref{pr:smallness}, $X$ is small.
\end{proof}

This allows us to characterize small types using truncations.

\begin{thm}\label{th:characterize-smallness}
Let $n \geq -1$ and let $X$ be a type.  Then $X$ is small if and only if
$X$ is $(n+1)$-locally small and $\|X\|_n$ is small.
\end{thm}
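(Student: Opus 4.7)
The plan is to prove both directions separately, with the reverse direction being the more substantive one that uses the machinery developed earlier in the section.

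For the forward direction, assume $X$ is small, witnessed by an equivalence $X \simeq X'$ with $X' : \UU$. Then $\|X\|_n \simeq \|X'\|_n$, and since $\UU$ is closed under truncation, the latter is in $\UU$, so $\|X\|_n$ is small. For $(n+1)$-local smallness, I would observe by a quick induction on $k \geq 0$ that any small type is $k$-locally small: the identity types of $X$ are equivalent to identity types of $X'$, which live in $\UU$ and are themselves small, so $X$ is $1$-locally small; and the inductive step follows by applying the hypothesis to each identity type.

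For the reverse direction, assume $X$ is $(n+1)$-locally small and $\|X\|_n$ is small. Consider the truncation unit $\eta : X \to \|X\|_n$, and use the standard equivalence
\[
X \simeq \sum_{y : \|X\|_n} \fib_\eta(y).
\]
The fibres of $\eta$ are $n$-connected (this is a standard property of the $n$-truncation), while the first projection $\fib_\eta(y) \to X$ has fibre $\eta(x) = y$ at $x : X$, which is an identity type in the $n$-truncated type $\|X\|_n$ and hence $n$-truncated. Thus the projection $\fib_\eta(y) \to X$ is an $n$-truncated map with $n$-connected domain and $(n+1)$-locally small codomain, so \cref{le:dual-smallness} applies and yields that each fibre $\fib_\eta(y)$ is small. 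Since $\|X\|_n$ is small and the small types are $\Sigma$-closed, the displayed $\Sigma$-type is small, and hence so is $X$.

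I do not anticipate a serious obstacle: the forward direction is essentially bookkeeping, and the reverse direction is a direct application of \cref{le:dual-smallness} once one notices that $\eta : X \to \|X\|_n$ has $n$-connected fibres and that the fibre inclusions into $X$ are $n$-truncated. The only mild subtlety is verifying the latter via the fact that identity types of an $n$-truncated type are $(n-1)$-truncated, hence in particular $n$-truncated, so that the fibre inclusion qualifies as an $n$-truncated map in the sense required by \cref{le:dual-smallness}.
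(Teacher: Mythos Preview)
Your proposal is correct and follows essentially the same approach as the paper: for the reverse direction you decompose $X$ as the total space of the fibres of the truncation map, observe that each fibre is $n$-connected and has an $n$-truncated projection to the $(n+1)$-locally small type $X$, and then invoke \cref{le:dual-smallness} to conclude each fibre is small. The paper's write-up is terser (it simply says the forward direction is ``clear'') but the argument is the same.
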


\begin{proof}
If $X$ is small, then clearly it is $(n+1)$-locally small and its $n$-truncation is small.

To prove the converse, consider the truncation map $|{-}| : X \to \|X\|_n$ and note that
\[
  X \simeq \sum_{y:\|X\|_n} \, \sum_{x:X} \, (|x| = y) .
\]
Thus it suffices to show that the fibre $\sum_{x:X} \, (|x| = y)$ is small for each
$y : \|X\|_n$.
The first projection map $\sum_{x:X} \, (|x| = y) \to X$ is $(n-1)$-truncated,
since the types $|x| = y$ are $(n-1)$-truncated.
In particular, this map is $n$-truncated.
Moreover, $\sum_{x:X} \, (|x| = y)$ is $n$-connected~\cite[Corollary 7.5.8]{Uni}.
Therefore, by \cref{le:dual-smallness}, $\sum_{x:X} \, (|x| = y)$ is small,
as required.
\end{proof}

We also give an alternate proof of the converse that avoids \cref{le:dual-smallness},
thus showing that \cref{th:characterize-smallness} doesn't require propositional resizing.%
\footnote{This is relevant for work in progress of Bezem, Coquand, Dybjer and Escardó.}

\begin{proof}[Proof 2]
As above, it suffices to show that the fibre $\sum_{x:X} \, (|x| = y)$
of the $n$-truncation map is small for each $y : \|X\|_n$.
Since being small is a proposition, we can assume that
$y \equiv |x'|$ for some $x' : X$.
We apply \cref{pr:smallness} with $f : \unit \to \sum_{x:X} \, (|x| = |x'|)$
the constant map with value the pair $(x', \refl)$.
The codomain of $f$ is $n$-connected, as it is a fibre of the $n$-truncation map,
so $f$ is $(n-1)$-connected.
So it remains to show that the codomain of $f$ is $(n+1)$-locally small.
We are assuming that $X$ is $(n+1)$-locally small, so it suffices to show
that for each $x : X$, $|x| = |x'|$ is $(n+1)$-locally small.
This type is equivalent to $|x = x'|_{n-1}$, where the subscript indicates
the shift in truncation level here.
We could apply \cref{le:truncated-type} now, but that uses propositional resizing.
However, one can see that it suffices to show that $x = x'$ is $(n+1)$-locally small,
and this follows from the assumption that $X$ is $(n+1)$-locally small (since that
implies $X$ is $(n+2)$-locally small).
\end{proof}

We can now give our dual version of \cref{pr:smallness},
which generalizes \cref{le:dual-smallness}.

\begin{cor}\label{co:dual-smallness}
Let $n \geq -1$.
If $f : X \to Y$ is $n$-truncated, $Y$ is $(n+1)$-locally small and
$\|X\|_n$ is small, then $X$ is small.
\end{cor}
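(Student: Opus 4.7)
The plan is to combine \cref{le:truncated-family} with \cref{th:characterize-smallness} directly. The hypotheses almost exactly match what is needed to invoke the characterization theorem, with one piece missing: the characterization requires $X$ (not $Y$) to be $(n+1)$-locally small.

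First I would use \cref{le:truncated-family} to promote local smallness from $Y$ to $X$. Since $f : X \to Y$ is $n$-truncated and $Y$ is $(n+1)$-locally small by hypothesis, that lemma immediately yields that $X$ is $(n+1)$-locally small. Second I would invoke \cref{th:characterize-smallness} with this same value of $n$: both of its hypotheses, namely that $X$ is $(n+1)$-locally small and that $\|X\|_n$ is small, are now in hand, so the conclusion is that $X$ is small.

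There is no real obstacle; the point of the statement is mostly to record a convenient packaging of the earlier results. If a more self-contained argument were desired, one could instead mimic the proof of \cref{th:characterize-smallness}: decompose $X \simeq \sum_{y : \|X\|_n}\sum_{x:X}(|x| = y)$, observe that for each $y$ the fibre $\sum_{x:X}(|x|=y)$ is $n$-connected and that the first projection to $X$ composed with $f$ gives an $n$-truncated map to $Y$, and then apply \cref{le:dual-smallness} fibrewise. But the direct route via \cref{le:truncated-family} and \cref{th:characterize-smallness} is cleaner and is the one I would take.
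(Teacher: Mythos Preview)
Your proposal is correct and matches the paper's proof exactly: apply \cref{le:truncated-family} to see that $X$ is $(n+1)$-locally small, then conclude by \cref{th:characterize-smallness}.
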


\begin{proof}
By \cref{le:truncated-family}, $X$ is $(n+1)$-locally small.
So by \cref{th:characterize-smallness}, $X$ is small.
\end{proof}

The next result can be used to replace the condition that $\|X\|_n$ be
small in \cref{th:characterize-smallness} and \cref{co:dual-smallness}
with a condition on homotopy groups.

\begin{prop}\label{pr:homotopy}
Let $n \geq 0$ and let $X$ be an $n$-truncated type.
Then $X$ is small if and only if
$\pi_0(X)$ is small and
for every $i : \N$ and $x : X$, $\pi_i(X, x)$ is small.
\end{prop}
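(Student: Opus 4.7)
The forward direction is immediate: if $X$ is small, then so is any type constructed from $X$, in particular $\pi_0(X) = \|X\|_0$ and each $\pi_i(X,x) = \|\Omega^i(X,x)\|_0$. I plan to prove the converse by induction on $n \geq 0$.

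For the base case $n = 0$, since $X$ is $0$-truncated we have $X \simeq \pi_0(X)$, so $X$ is small whenever $\pi_0(X)$ is. For the inductive step, assume the result for $n$ and let $X$ be $(n+1)$-truncated with $\pi_0(X)$ and all $\pi_i(X,x)$ small. Using the equivalence $X \simeq \sum_{t : \pi_0(X)} \fib_{|{-}|_0}(t)$ and the smallness of $\pi_0(X)$, together with $\Sigma$-closure of smallness, it suffices to show that each fibre $X_t \defeq \fib_{|{-}|_0}(t)$ is small. Each $X_t$ is $(n+1)$-truncated (as a subtype of $X$) and $0$-connected by construction.

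To show $X_t$ is small, I will apply \cref{th:characterize-smallness} with $n = 0$: it suffices to verify that $X_t$ is $1$-locally small and that $\pi_0(X_t)$ is small. The latter is trivial since $X_t$ is $0$-connected. For $1$-local smallness, let $x, x' : X_t$; I must show $x = x'$ is small. Since smallness is a mere proposition and $X_t$ is merely inhabited (by $0$-connectedness), I may choose a basepoint $x_0 : X_t$, and similarly pick paths $x_0 = x$ and $x_0 = x'$ (again using mere propositionality together with $0$-connectedness of $X_t$). Conjugating yields an equivalence $(x = x') \simeq \Omega(X_t, x_0)$, so it is enough to show $\Omega(X_t, x_0)$ is small.

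Now $\Omega(X_t, x_0)$ is $n$-truncated, and its homotopy groups are shifts of those of $X_t$ (and hence of $X$, since the inclusion $X_t \hookrightarrow X$ is an embedding that is an equivalence on loop spaces based at $x_0$): $\pi_0(\Omega(X_t, x_0)) = \pi_1(X, x_0)$, and for a loop $q$ based at $x_0$, $\pi_i(\Omega(X_t, x_0), q) \simeq \pi_{i+1}(X, x_0)$ via change of basepoint. All of these are small by hypothesis, so the inductive hypothesis applies to $\Omega(X_t, x_0)$, completing the proof. The only mildly delicate step is the basepoint-selection maneuver, which is routine because the goal (smallness) is a mere proposition; otherwise the argument is a clean combination of splitting into connected components, looping, and the previously established characterization theorem.
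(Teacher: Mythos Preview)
Your proof is correct, but it takes a genuinely different route from the paper's.

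The paper's inductive step truncates at level $m$ (where $n = m+1$), uses the inductive hypothesis to conclude that $\|X\|_m$ is small, and then identifies each fibre of $X \to \|X\|_m$ as a pointed, $m$-connected, $(m+1)$-truncated type, hence an Eilenberg--Mac Lane space $K(G, m+1)$ by~\cite{BDR}; the long exact sequence gives $G \simeq \pi_{m+1}(X, x')$, and the construction of~\cite{LF} shows $K(G, m+1)$ is small when $G$ is.  Your argument instead truncates at level $0$, reduces smallness of each connected fibre to $1$-local smallness via \cref{th:characterize-smallness}, and then shows the loop space is small by applying the inductive hypothesis directly.  Your approach is more self-contained within the paper: it avoids the external inputs on Eilenberg--Mac Lane spaces and the long exact sequence, at the cost of invoking \cref{th:characterize-smallness}.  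The paper's approach, on the other hand, makes the role of the single ``new'' homotopy group $\pi_{m+1}$ completely explicit at each stage.

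One small remark on your justification: the step ``$\pi_i(\Omega(X_t, x_0), q) \simeq \pi_{i+1}(X, x_0)$ via change of basepoint'' is correct, but not because $\Omega(X_t, x_0)$ is connected (it need not be).  The point is that right-multiplication by $q^{-1}$ is a self-equivalence of $\Omega(X_t, x_0)$ sending $q$ to $\refl$, so any two basepoints of a loop space yield isomorphic homotopy groups.  With that clarification the argument goes through cleanly.
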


The assumption on $\pi_i(X, x)$ is redundant when $i > n$, since
$X$ is $n$-truncated and contractible types are always small.
Note that we separately assume that $\pi_0(X)$ is small, since
$X$ might not have any global elements (but see \cref{re:issmall-inhabited-issmall}).

\begin{proof}
The ``only if'' direction is clear.

We prove the converse by induction on $n$.
When $n = 0$, $X$ is equivalent to $\pi_0(X)$, which is assumed to be small.

Now assume that $n \equiv m+1$ for $m : \N$.
As in \cref{th:characterize-smallness},
consider the truncation map $|{-}| : X \to \|X\|_m$ and note that
\[
  X \simeq \sum_{y:\|X\|_m} \, \sum_{x:X} \, (|x| = y) .
\]
By the inductive hypothesis, $\|X\|_m$ is small.
Again, we take $y \equiv |x'|$ for some $x' : X$, and need to show
that the fibre $F \defeq \sum_{x:X} \, (|x| = |x'|)$ is small.
This type is pointed, $m$-connected and $(m+1)$-truncated, so it is
an Eilenberg--Mac Lane space for some group $G$, by~\cite[Theorem~5.1]{BDR}.
From the long exact sequence of the fibre sequence, we see that $G \simeq \pi_{m+1}(X,x')$,
which we have assumed is small.
The construction of Eilenberg--Mac Lane spaces~\cite{LF}
then implies that the fibre $F$ is small, as required.
\end{proof}

\begin{rem}\label{re:issmall-inhabited-issmall}
One can show that if $X$ being inhabited implies that $X$ is small,
then $X$ is small.  In other words, if we have a function
\[
  X \to \left(\sum_{X' : \UU} (X' \simeq X)\right) ,
\]
then $X$ is small.
This uses propositional resizing and is in fact equivalent to it.
Since this has been formalized and we are not using it, we omit the proof.
Because of this result, one can remove ``$\pi_0(X)$ is small and'' from the statement
of \cref{pr:homotopy}, giving a slightly cleaner result.
\end{rem}

\begin{ex}
Let $G$ and $H$ be groups, and write $BG \equiv K(G,1)$ and $BH \equiv K(H,1)$
for their classifying spaces~\cite{LF}.
Suppose that $f : BH \pto BG$ is a $0$-truncated, pointed map.
This means that its fibres are sets, or equivalently that the corresponding
map $H \to G$ is a subgroup inclusion.
If $BG$ is small, then \cref{le:dual-smallness} implies that $BH$ is
small, since it is $0$-connected.
\end{ex}

\begin{ex}\label{ex:surjective}
With the same notation,
suppose that $f : BG \pto BH$ is a $0$-connected, pointed map, which means that it
represents a surjection of groups~\cite[Corollary~8.8.5]{Uni}.
Suppose that $BG$ is small.  Since $BH$ is a 1-type, it is 2-locally small.
So by \cref{pr:smallness}, $BH$ is small.
\end{ex}

\section{Non-accessible localizations}\label{se:non-accessible}

This section contains the main results of the paper.

We will make use of the concept of an orthogonal factorization system.
The following definition is equivalent to the definition in~\cite{RSS}.

\begin{defn}\label{de:OFS}
An \dfn{orthogonal factorization system on $\UU$} is a pair of predicates
$\cL,\, \cR : \prod_{X, Y : \UU} (X \to Y) \to \Prop_{\UU}$
such that $\cL = {}^{\perp}\cR$, $\cR = \cL^{\perp}$, and every
map $g$ in $\UU$ can be factored as $g = k \circ h$, with $h$ in $\cL$
and $k$ in $\cR$.
Here ${}^{\perp}\cR$ denotes the class of maps with the unique left
lifting property with respect to $\cR$,
and $\cL^{\perp}$ denotes the class of maps with the unique right
lifting property with respect to $\cL$.
\end{defn}

A key family of examples has as left class the $n$-connected maps and
as right class the $n$-truncated maps, for each $n \geq -2$.
(See~\cite[Section~7.6]{Uni} or~\cite[Theorem~1.34]{RSS}.)

We begin with an easy result that we will use to prove the main theorem.

\begin{prop}\label{pr:restrict}
Let $\UU$ be a universe contained in a universe $\UU'$.
Let $L$ be a reflective subuniverse of $\UU'$ such that for every $X : \UU$,
$LX$ is small.
Then the subuniverse of $L$-local types in $\UU$ is reflective.
\end{prop}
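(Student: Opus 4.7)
The plan is to transport the reflective subuniverse structure on $\UU'$ down to $\UU$ using the hypothesis that $LX$ is small for each $X : \UU$, together with propositional resizing (which is in force throughout the paper).

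First, I would define the three pieces of data required by the definition of a reflective subuniverse of $\UU$. For the subuniverse predicate, set $\islocal{L'}(X) \defeq \islocal{L}(X)$ for $X : \UU$. A priori this lands in $\Prop_{\UU'}$, but by propositional resizing it is equivalent to a proposition in $\UU$, so this gives a predicate $\islocal{L'} : \UU \to \Prop_{\UU}$. For the functor, fix $X : \UU$. By assumption, $LX$ is small, so the mere proposition $\sum_{X' : \UU}\, X' \simeq LX$ is inhabited; since it is a mere proposition (by univalence), we can extract from it a type $L'X : \UU$ and an equivalence $e_X : L'X \simeq LX$. Finally, define the unit $\eta'_X \defeq e_X^{-1} \circ \eta_X : X \to L'X$, where $\eta_X : X \to LX$ is the unit of $L$.

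Next I would verify the two conditions. For locality of $L'X$: since $L'X \simeq LX$ and the predicate $\islocal{L'}$ is by construction $\islocal{L}$ (which is equivalence-invariant), $L'X$ is $L'$-local because $LX$ is $L$-local. For the universal property, let $Y : \UU$ be $L'$-local, which is the same as being $L$-local. Precomposition with $\eta_X$ gives an equivalence $(LX \to Y) \to (X \to Y)$, by the universal property of $\eta_X$ in $\UU'$ (noting that $Y : \UU'$ too, since $\UU \subseteq \UU'$). Precomposition with $e_X^{-1}$ gives an equivalence $(L'X \to Y) \simeq (LX \to Y)$. Composing, precomposition with $\eta'_X = e_X^{-1} \circ \eta_X$ is an equivalence $(L'X \to Y) \to (X \to Y)$, as required.

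There is essentially no hard step here: the content is bookkeeping between the two universes, and the only subtle point is that we genuinely need propositional resizing to place the predicate $\islocal{L'}$ in $\Prop_{\UU}$ rather than $\Prop_{\UU'}$. The smallness hypothesis is used only to define $L'X$ as an object of $\UU$; everything else follows by transporting the structure of $L$ along the equivalences $e_X$.
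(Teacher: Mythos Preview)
Your proposal is correct and follows essentially the same approach as the paper: use propositional resizing to place the locality predicate in $\Prop_{\UU}$, use the smallness hypothesis to choose $L'X : \UU$ together with an equivalence to $LX$, and define the unit as the composite of $\eta_X$ with the inverse equivalence. The paper simply says it is ``straightforward to check'' that the resulting map is a localization, whereas you have spelled out that verification.
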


\begin{proof}
By propositional resizing, we can replace the mere proposition
$\islocal{L} : \UU' \to \Prop_{\UU'}$ defining the subuniverse $L$
with an equivalent one landing in $\Prop_{\UU}$.
Write $\islocal{\tilde{L}} : \UU \to \Prop_{\UU}$ for the restriction of
the new predicate to $\UU$, giving us a subuniverse $\tilde{L}$ of $\UU$.

We are given $\mathsf{is} : \prod_{X : \UU} \sum_{X' : \UU} (X' \simeq LX)$.
For $X : \UU$, define $\tilde{L} X$ to be $\pr_1 (\mathsf{is}\, X)$, so that
$\tilde{L} X : \UU$ and $\tilde{L} X \simeq LX$.
In particular, $\tilde{L} X$ is in the subuniverse $\tilde{L}$.

Define $\tilde{\eta} : X \to \tilde{L} X$ to be the composite
$X \to LX \to \tilde{L} X$, where the first map is the localization
in $\UU'$ and the second map is the inverse of the given equivalence.
It is straightforward to check that $\tilde{\eta}$ is a localization.
\end{proof}

Now we give the main theorem.

\begin{thm}\label{th:non-accessible}
Let $n \geq -1$ and
let $f : \prod_{i:I} (A_i \to B_i)$ be a family of $(n-1)$-connected maps,
with no smallness hypotheses on $I$, $A_i$ or $B_i$.
Then the subuniverse of $n$-truncated $f$-local types in $\UU$ is reflective.
\end{thm}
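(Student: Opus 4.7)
The plan is the one sketched in the introduction: enlarge the universe, perform an accessible localization there, and restrict down to $\UU$ using \cref{pr:restrict}. Concretely, pick a universe $\UU'$ containing $\UU$, $I$, and all of the $A_i$ and $B_i$. Extend $f$ to a family $\bar{f}$ indexed by $I + 1$ by adjoining the map $\sphere{n+1} \to 1$; then $\bar{f}$ is a small family in $\UU'$. A type $X$ is $\bar{f}$-local if and only if it is both $f$-local and $\sphere{n+1}$-null, which is in turn equivalent to being $n$-truncated and $f$-local. By the accessible localization theorem \cite[Theorem~2.18]{RSS}, the $\bar{f}$-local types form a reflective subuniverse $L$ of $\UU'$.

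To invoke \cref{pr:restrict}, I need that $LX$ is small for every $X : \UU$. Two facts converge to give this. First, $LX$ is $\bar{f}$-local, hence $\sphere{n+1}$-null, hence $n$-truncated; by \cref{le:truncated-type} it is therefore $(n+1)$-locally small. Second, every map in $\bar{f}$ is $(n-1)$-connected: the maps in $f$ by hypothesis, and $\sphere{n+1} \to 1$ because $\sphere{n+1}$ is even $n$-connected. By \cite[Theorem~3.12]{CORS}, the unit $\eta_X : X \to LX$ of a localization generated by $(n-1)$-connected maps is itself $(n-1)$-connected. Since $X : \UU$ is small, \cref{pr:smallness} applied to $\eta_X$ now yields that $LX$ is small, as required, and \cref{pr:restrict} finishes the proof.

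The main difficulty is conceptual rather than technical: one must pick an auxiliary family strong enough to force local types to be $n$-truncated (so that \cref{le:truncated-type} automatically makes $LX$ $(n+1)$-locally small) yet weak enough to keep the generating family $(n-1)$-connected (so that \cite[Theorem~3.12]{CORS} makes $\eta_X$ connected enough to feed into \cref{pr:smallness}). Adjoining precisely $\sphere{n+1} \to 1$ strikes this balance. Everything else, including the passage from $\Prop_{\UU'}$ down to $\Prop_{\UU}$, is handled tacitly by propositional resizing inside \cref{pr:restrict}.
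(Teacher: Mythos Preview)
Your proof is correct and follows essentially the same approach as the paper's: extend $f$ by $\sphere{n+1}\to 1$, localize in a larger universe $\UU'$ via~\cite[Theorem~2.18]{RSS}, and then restrict using \cref{pr:restrict} after verifying smallness via \cref{le:truncated-type}, \cite[Theorem~3.12]{CORS}, and \cref{pr:smallness}. The only cosmetic difference is that the paper explicitly notes that the $(n-1)$-connected maps form the left class of an orthogonal factorization system when invoking~\cite[Theorem~3.12]{CORS}, whereas you cite that theorem directly.
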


\begin{proof}
Recall that a type is $n$-truncated if and only if it is local
with respect to the single map $S^{n+1} \to 1$.
Motivated by this,
consider the extended family $\bar{f}$, indexed by $I + 1$, which
is defined to be $f$ on the left summand and which sends elements of the right summand
to the map $S^{n+1} \to 1$.
Then a type is $\bar{f}$-local if and only if it is $f$-local and $n$-truncated.

Let $\UU'$ be a universe containing $\UU$, $I$ and all of the types $A_i$ and $B_i$.
By~\cite[Theorem~2.18]{RSS}, the subuniverse of $\bar{f}$-local types in $\UU'$
is reflective.
By \cref{pr:restrict}, it is enough to
show that for $X : \UU$, $L_{\bar{f}}X$ is small.
By \cref{le:truncated-type}, $L_{\bar{f}} X$ is $(n+1)$-locally small.
Now, since all of the maps in the family $\bar{f}$ are $(n-1)$-connected
(including the additional map $S^{n+1} \to 1$),
and the $(n-1)$-connected maps are the left class in an orthogonal factorization system,
\cite[Theorem~3.12]{CORS} implies that the map
$\eta : X \to L_{\bar{f}}X$ is $(n-1)$-connected.
Therefore, by \cref{pr:smallness}, $L_{\bar{f}} X$ is small.
\end{proof}

The case $n=1$ recovers the classical result of~\cite[Section~6]{CSS}, whose local objects
are all 1-types.
See \cref{se:pastwork} for more details.

\begin{rem}\label{re:local-types}
As mentioned in the proof of \cref{th:non-accessible}, for each $X$, the
type $L_{\bar{f}} X$ is $n$-truncated and the map $\eta_{\bar{f}} : X \to L_{\bar{f}} X$
is $(n-1)$-connected.  It follows that for each $x : X$, the induced map
\[
  \pi_i(X,x) \lra \pi_i(L_{\bar{f}} X,\, \eta_{\bar{f}} x)
\]
is an equivalence for $i < n$ and surjective for $i = n$, while for $i > n$,
the codomain is trivial.
As a consequence, if $X$ is an $(n-1)$-type, then
$\eta_{\bar{f}} : X \to L_{\bar{f}} X$ is an equivalence, by the truncated
Whitehead theorem.  In other words, every $(n-1)$-type is $\bar{f}$-local.
So the $\bar{f}$-local types sit between the $(n-1)$-types and the $n$-types.
\end{rem}

\begin{rem}
For a type $X$, consider the composite $X \to \|X\|_n \to L_f \|X\|_n$ of
the localization maps.  Just as for $\eta_{\bar{f}}$, both of these maps
are $(n-1)$-connected.  In particular, $L_f \|X\|_n$ is $n$-truncated
and is therefore $\bar{f}$-local.
Each of the localization maps has the property that it is sent to an
equivalence by $Z^{(-)}$ for any $\bar{f}$-local type $Z$.
Indeed, the first is initial among maps into all $n$-truncated types, and the
second is initial among maps into all $f$-local types.
Therefore, this composite also presents the $\bar{f}$-localization of $X$.
That is, there is an equivalence $e : L_{\bar{f}} X \to L_f \|X\|_n$ making the
diagram
\[
  \begin{tikzcd}
    X \arrow[r,"|-|"] \arrow[d,swap,"\eta_{\bar{f}}"] & \|X\|_n \arrow[d,"\eta_f"] \\
    L_{\bar{f}} X \arrow[r,"\sim","e"'] & L_f \|X\|_n
  \end{tikzcd}
\]
commute.
\end{rem}

Under the same hypotheses as \cref{th:non-accessible}, we can get an
orthogonal factorization system on $\UU$.
This possibility was suggested to me by Mathieu Anel.

\begin{thm}\label{th:OFS}
Let $n \geq -1$ and
let $f : \prod_{i:I} (A_i \to B_i)$ be a family of $(n-1)$-connected maps.
Let $\cR$ consist of those maps in $\UU$ which are $n$-truncated and right orthogonal to $f$,
and let $\cL$ consist of those maps in $\UU$ which are left orthogonal to $\cR$.
Then $(\cL, \cR)$ is an orthogonal factorization system on $\UU$.
\end{thm}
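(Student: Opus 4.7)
The plan is to build the factorization by fibrewise $\bar{f}$-localization and verify the two lifting properties using the $(n-1)$-connected / $n$-truncated OFS on $\UU$.

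For the factorization, given $g : X \to Y$ in $\UU$ each fibre $\fib_g(y)$ lies in $\UU$, so by \cref{th:non-accessible} its $\bar{f}$-localization $L_{\bar{f}}\fib_g(y)$ also lies in $\UU$. Set $X'' \defeq \sum_{y:Y} L_{\bar{f}}\fib_g(y)$ and let $r : X'' \to Y$ be the first projection, with $l : X \to X''$ defined fibrewise so that on the fibre over $y$ it restricts to the localization unit $\eta_y : \fib_g(y) \to L_{\bar{f}}\fib_g(y)$; then $g = r \circ l$.

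To show $r \in \cR$, first note that the fibre of $r$ over $y$ is $L_{\bar{f}}\fib_g(y)$, which is $n$-truncated by \cref{re:local-types}; hence $r$ is $n$-truncated. For $r \perp f_i$, a commuting square with $f_i$ on the left and $r$ on the right is, after unfolding the $\Sigma$-decomposition of $X''$, a map $b : B_i \to Y$ together with a section over $A_i$ (via $f_i$) of the family $P : B_i \to \UU$ defined by $P(y') \defeq L_{\bar{f}}\fib_g(b(y'))$. Each $P(y')$ is $n$-truncated, so the projection $\sum_{y'} P(y') \to B_i$ is an $n$-truncated map; since $f_i$ is $(n-1)$-connected, the $(n-1)$-connected / $n$-truncated OFS produces a unique extension to a section over $B_i$, which is exactly the required unique diagonal. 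For $l \in \cL$, the fibre of $l$ over $(y, z) : X''$ coincides with $\fib_{\eta_y}(z)$ and is $(n-1)$-connected because $\eta_y$ is (again by \cref{re:local-types}). Hence $l$ is $(n-1)$-connected and so left orthogonal to every $n$-truncated map; since $\cR$ consists of $n$-truncated maps, $l \in {}^{\perp}\cR = \cL$.

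It remains to verify $\cR = \cL^{\perp}$. One inclusion is immediate from $\cL = {}^{\perp}\cR$, and for the other I would factor any $r' \in \cL^{\perp}$ as $r'' \circ l'$ as above, use $r' \perp l'$ to exhibit $r'$ as a retract of $r''$ in the arrow category, and conclude $r' \in \cR$ since both being $n$-truncated and being right orthogonal to each $f_i$ are closed under retracts. The main obstacle is verifying $r \perp f_i$: this is where the $(n-1)$-connectedness of $f_i$ and the $n$-truncatedness of the $\bar{f}$-local types combine via the standard connected / truncated OFS on $\UU$, and it is the heart of why the theorem requires its hypotheses; the remainder of the argument is largely formal.
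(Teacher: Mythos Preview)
Your argument that $r \perp f_i$ has a genuine gap. You invoke ``the $(n-1)$-connected / $n$-truncated OFS'', but no such OFS exists: the standard OFS pairs $(n-1)$-connected maps with $(n-1)$-truncated maps, not $n$-truncated ones. Your argument only uses that the fibres $L_{\bar f}\fib_g(y)$ are $n$-truncated, and if that sufficed then \emph{every} $n$-truncated map would lie in $\cR$, making the ``right orthogonal to $f$'' condition vacuous. In fact, having $\bar f$-local fibres does not imply right orthogonality to $f_i$. Take $n=0$, $I=\unit$, and $f_0 : \unit \to \sphere{1}$ the basepoint inclusion (which is $(-1)$-connected). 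Then $\bar f$-local is exactly $0$-truncated, so $L_{\bar f}$ is $0$-truncation. For $g \defeq f_0 : \unit \to \sphere{1}$ your construction gives $X'' = \sum_{y:\sphere{1}} \|\, \mathsf{base}=y \,\|_0 = \sum_{y:\sphere{1}} (\mathsf{base}=y) \simeq \unit$, so $r$ is (equivalent to) $f_0$ itself. But $f_0 \notin \cR$: it is $0$-truncated, yet $f_0 \not\perp f_0$, since the square with $\id_{\unit}$ on top and $\id_{\sphere{1}}$ on the bottom has no diagonal filler. So fibrewise $\bar f$-localization simply does not produce the OFS factorization; this reflects the fact that $L_{\bar f}$ is only a reflective subuniverse, not a modality, so its ``stable'' factorization need not coincide with $\bar f^{\perp}$.

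The paper avoids this by not using fibrewise localization at all. It passes to a larger universe $\UU'$ containing $I$, $A_i$, $B_i$ and applies \cite[Theorem~2.41]{RSS} to obtain the genuine OFS $(\cL',\cR')$ on $\UU'$ with $\cR' = \bar f^{\perp}$; the factorization there is built from a different higher inductive construction, not fibrewise $L_{\bar f}$. Given $g : X \to Y$ in $\UU$, one takes the $(\cL',\cR')$-factorization $X \xrightarrow{l} W \xrightarrow{r} Y$ in $\UU'$, and then uses \cref{le:truncated-family} (since $r$ is $n$-truncated) and \cref{pr:smallness} (since $l$ is $(n-1)$-connected) to show that $W$ is small, so the factorization descends to $\UU$. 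Your smallness ingredients are the right ones; what is missing is the correct source of the factorization.
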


\begin{proof}
The proof parallels the proof of \cref{th:non-accessible}.
Recall that a map is $n$-truncated if and only if it is right orthogonal to the
single map $S^{n+1} \to 1$.
Extend $f$ to a family $\bar{f}$ of maps indexed by $I + 1$, which also includes
the map $S^{n+1} \to 1$.
Then a map is right orthogonal to $\bar{f}$ if and only if it is $n$-truncated
and right orthogonal to $f$.

Let $\UU'$ be a universe containing $\UU$, $I$ and all of the types $A_i$ and $B_i$
in the family $f$.
By~\cite[Theorem~2.41]{RSS}, there is an orthogonal factorization system
$(\cL', \cR')$ on $\UU'$, with $\cR' = \bar{f}^{\perp}$ and $\cL' = {}^{\perp}\cR'$.
Let $g : X \to Y$ be a map in $\UU$, and consider an $(\cL',\cR')$-factorization
\[
  X \llra{l} W \llra{r} Y
\]
of $g$.
We will show below that $W$ is small.
Therefore, after composing with appropriate equivalences, we can assume that $W$ is in $\UU$.
It follows that $r$ is in $\cR$, since $\cR$ consists precisely of those maps in
$\cR'$ which are in $\UU$.
The map $l$ is left orthogonal to $\cR'$, so it is also left orthogonal to $\cR$,
and hence is in $\cL$.

We now show that $W$ is small.
Since $r$ is in $\cR'$, it is $n$-truncated.
Therefore, by \cref{le:truncated-family}, $W$ is $(n+1)$-locally small.
Since every map in $\bar{f}$ is $(n-1)$-connected, $\cR'$ contains the $(n-1)$-truncated maps.
Therefore, $\cL'$ is contained in the $(n-1)$-connected maps,
and so $l$ is $(n-1)$-connected.
\cref{pr:smallness} then implies that $W$ is small.
\end{proof}

\cref{th:non-accessible} follows from \cref{th:OFS} by considering the
factorization of the map $X \to 1$ for each $X$ in $\UU$.

\section{An application to separated localizations}\label{se:L'}

In this short section, we give an alternate proof of a result from~\cite{CORS}.
This is not needed in the remainder of the paper.

We first recall some terminology.
For a reflective subuniverse $L$, a type $X$ is \dfn{$L$-separated}
if $x = y$ is $L$-local for every $x, y : X$.

\begin{thm}[{\cite[Theorem~2.25]{CORS}}]
For any reflective subuniverse $L$ of $\UU$, the subuniverse of
$L$-separated types in $\UU$ is again reflective.
\end{thm}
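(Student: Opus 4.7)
My plan is to present $L$-separation as an $f$-locality condition for a $\UU$-indexed family of maps, carry out the localization in a larger universe $\UU' \supseteq \UU$ via~\cite[Theorem~2.18]{RSS}, and then use the smallness results of \cref{se:smallness} together with \cref{pr:restrict} to descend to $\UU$.

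First I would observe that, using the standard equivalence $X^{\Sigma Z} \simeq \sum_{x, y : X}(x = y)^Z$ classifying maps out of a suspension, a type $X$ is local with respect to $\Sigma \eta_Y : \Sigma Y \to \Sigma LY$ if and only if every identity type $x = y$ of $X$ is $\eta_Y$-local. Combined with the fact that $X$ is $L$-local iff $X$ is $\eta_Y$-local for all $Y : \UU$, this yields that $X$ is $L$-separated iff $X$ is $\Sigma\eta_Y$-local for every $Y : \UU$. Picking a universe $\UU'$ with $\UU : \UU'$, the family $\{\Sigma\eta_Y\}_{Y : \UU}$ is then a $\UU$-indexed family of maps in $\UU'$, so by~\cite[Theorem~2.18]{RSS} the $L$-separated types in $\UU'$ form a reflective subuniverse with unit $\eta' : X \to L' X$.

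By \cref{pr:restrict}, it remains to show that $L' X$ is small whenever $X : \UU$. I would prove this by applying \cref{pr:smallness} with $n = 0$. For the connectivity hypothesis, each map $\Sigma\eta_Y$ is $(-1)$-connected because, up to path equality, the points of $\Sigma LY$ consist of the two poles, both of which lie in the image of $\Sigma\eta_Y$; since the $(-1)$-connected maps form the left class of an orthogonal factorization system, \cite[Theorem~3.12]{CORS} then guarantees that $\eta'$ is also $(-1)$-connected. For the local-smallness hypothesis, each identity type of $L'X$ is $L$-local because $L'X$ is $L$-separated, and using the surjectivity of $\eta'$ together with smallness being a mere proposition, we may reduce to showing that $\eta'(x) = \eta'(x')$ is small for $x, x' : X$, which I would obtain by identifying it with the $L$-reflection $L(x = x') : \UU$.

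The main obstacle is establishing this last identification, that $\ap(\eta') : (x = x') \to (\eta'(x) = \eta'(x'))$ exhibits its $L$-local codomain as the $L$-reflection of its domain. I would approach this via a dependent version of the universal property of $L'X$, defining a type family on $L'X$ by induction whose fiber over $\eta'(x')$ is $L(x = x')$ and verifying the equivalence by an encode-decode argument; the closure of $L$-local types under dependent products keeps this family in the target subuniverse. Granted this identification, \cref{pr:smallness} yields smallness of $L'X$, and \cref{pr:restrict} concludes.
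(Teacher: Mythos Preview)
Your proposal is correct and follows essentially the same route as the paper's proof: express $L$-separation as locality with respect to a family of suspended maps, localize in a larger universe via~\cite[Theorem~2.18]{RSS}, invoke~\cite[Theorem~3.12]{CORS} to get $(-1)$-connectedness of $\eta'$, and then descend to $\UU$ via \cref{pr:restrict} and the $n=0$ case of \cref{pr:smallness}, using the identification $(\eta'(x)=\eta'(x'))\simeq L(x=x')$ for $1$-local smallness. The only differences are cosmetic: the paper indexes over \emph{all} $L$-equivalences rather than just the units $\eta_Y$ (these give the same local types, hence the same separated types after suspension), and it cites~\cite[Proposition~2.26]{CORS} for the key identification rather than sketching an encode--decode argument as you do.
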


\begin{proof}
Consider the type of $L$-equivalences in $\UU$, namely
$I \defeq \Sigma_{X : \UU} \Sigma_{Y : \UU} \Sigma_{g : X \to Y} \IsEquiv(Lg)$.
Projection onto the third component $g$ defines a family $f$ of maps.
By~\cite[Lemma~2.9]{CORS}, the maps $\eta : X \to LX$ are $L$-equivalences,
and so the $f$-local types in $\UU$ are exactly the $L$-local types.

Now consider the family $\susp f$ which sends $(X; Y; g; p)$ to
the suspension $\susp g : \susp X \to \susp Y$ of $g$.
By~\cite[Lemma~2.15]{CORS}, a type in $\UU$ is $(\susp f)$-local if
and only if it is $L$-separated.

Let $\UU'$ be a universe containing $\UU$, and therefore the type $I$
of $L$-equivalences.  By~\cite[Theorem~2.18]{RSS}, the subuniverse of
$(\susp f)$-local types in $\UU'$ is reflective.  By
\cref{pr:restrict}, it suffices to show that for each $X : \UU$,
$L_{\susp f} X$ is small.  Each $\susp g$ is $(-1)$-connected, so
by~\cite[Theorem~3.12]{CORS} the maps $\eta' : X \to L_{\susp f} X$
are $(-1)$-connected.  Therefore, by Rijke's join construction (the
$n=0$ case of \cref{pr:smallness}), it suffices to show that $L_{\susp
f} X$ is 1-locally small when $X : \UU$, i.e. that $x' = y'$ is small
for $x', y' : L_{\susp f} X$.  Since this is a proposition and $\eta'$
is $(-1)$-connected, it is enough to prove that $\eta'(x) = \eta'(y)$ is
small for $x, y: X$.  But by~\cite[Proposition~2.26]{CORS} (which does
not use Theorem~2.25), $(\eta'(x) = \eta'(y)) \simeq L(x = y)$, which
is in $\UU$.
\end{proof}

This proof uses many of the same ingredients as the proof in~\cite{CORS},
but is a bit more conceptual.

\section{Replacing a family with a single map}\label{se:single-map}

In homotopy type theory, an accessible reflective subuniverse can be
presented as a localization with respect to a family of maps indexed by a type.
Here we investigate when the family can be replaced by a single map.
We work internally in HoTT in \cref{ss:inHoTT}, and work in the
simplicial model in \cref{ss:sSet}.

\subsection{In homotopy type theory}\label{ss:inHoTT}

We begin with a reduction to a family indexed by a 0-type.

\begin{lem}\label{le:accessible-pullback}
Let $f : \Pi_{i : I} A_i \to B_i$ be a family of maps indexed by a type $I$,
and let $g : J \to I$ be $(-1)$-connected.
Then a type $X$ is $f$-local if and only if it is $(f \circ g)$-local,
where 
\[
(f \circ g)(j) \defeq f_{g(j)} : A_{g(j)} \to B_{g(j)} .
\]
\end{lem}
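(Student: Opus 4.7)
My plan is to prove both directions separately; one direction is a trivial reindexing, while the other uses that locality is a proposition-valued predicate combined with the surjectivity implied by the epimorphism hypothesis on $g$.

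For $(\Rightarrow)$, assume $X$ is $f$-local. For each $j : J$, the map $f_{g(j)}$ is literally $f_i$ for $i \defeq g(j)$, so precomposition with $f_{g(j)}$ on exponentials into $X$ is an equivalence by hypothesis. Hence $X$ is $(f \circ g)$-local.

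For $(\Leftarrow)$, assume $X$ is $(f \circ g)$-local and fix $i : I$. I must show that precomposition by $f_i$ gives an equivalence $X^{B_i} \to X^{A_i}$, which is a mere proposition. Since $g$ is an epimorphism, it is in particular surjective (i.e.\ $(-1)$-connected), so the fibre $\fib_g(i)$ is merely inhabited; we may therefore assume we have $j : J$ and a path $p : g(j) = i$. Applying $A$, $B$, and $f$ (the last dependently) to $p$ identifies the arrow $f_{g(j)} : A_{g(j)} \to B_{g(j)}$ with $f_i : A_i \to B_i$, and hence identifies the precomposition map $X^{f_{g(j)}}$ with $X^{f_i}$. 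The former is an equivalence by $(f \circ g)$-locality of $X$, so the latter is too.

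The only place the hypothesis on $g$ enters is in extracting a preimage of $i$ from the propositional truncation of $\fib_g(i)$, so only surjectivity of $g$ is truly needed; any reasonable definition of ``epimorphism'' in HoTT will supply this. If one instead wishes to use a stronger ``epi'' property directly (e.g.\ that $Y^g$ is an embedding for every $Y$), a slicker variant applies the epi property to the set $\Prop_{\UU}$: the two predicates $\lambda i.\,\IsEquiv(X^{f_i})$ and $\lambda i.\,\top$ in $\Prop_{\UU}^{I}$ agree after precomposition with $g$, hence are equal, yielding $f$-locality of $X$. No substantial mathematical obstacle is expected.
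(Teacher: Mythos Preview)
Your proof is correct and follows essentially the same approach as the paper: both directions are handled identically, with the forward direction being a trivial reindexing and the backward direction using that $\IsEquiv$ is a mere proposition together with surjectivity of $g$ to extract a preimage $j$ with $g(j)=i$. Your additional paragraph on the precise meaning of ``epimorphism'' and the alternative argument via $\Prop_{\UU}^I$ is extra commentary not present in the paper, but it is sound and does not detract from the main argument.
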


\begin{proof}
It is clear that if $X$ is $f$-local, then it is $(f \circ g)$-local.
(This direction does not use that $g$ is $(-1)$-connected.)

Conversely, suppose that $X$ is $(f \circ g)$-local.
Let $i : I$.
We would like to prove that $(f_i)^* : X^{B_i} \to X^{A_i}$ is an equivalence.
This is a proposition, so we may assume we have $j : J$ with $g(j) = i$.
Since $f_{g(j)}^*$ is an equivalence by assumption, we are done.
\end{proof}

\begin{defn}
The axiom \dfn{sets cover} says that for every type $X$, there merely exists a 0-type $S$
and a $(-1)$-connected map $S \to X$.
For a universe $\UU$, we say that \dfn{sets cover in $\UU$} if for every $X : \UU$,
there merely exists a 0-type $S : \UU$ and a $(-1)$-connected map $S \to X$.
\end{defn}

We show in \cref{ss:sSet} that this holds in the simplicial model.

\begin{prop}
Assume that sets cover in $\UU$.
Then any accessible reflective subuniverse $L$ of $\UU$ can merely be presented
as localization with respect to a family in $\UU$ indexed by a set.
\end{prop}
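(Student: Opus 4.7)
The plan is to apply \cref{le:accessible-pullback} after reindexing the given family along an appropriate set cover. The conclusion is a mere proposition, namely the mere existence of a set-indexed presentation of $L$, so it suffices to produce such a presentation assuming concrete witnesses for both hypotheses. Unfolding accessibility of $L$ gives a family $f : \prod_{i:I}(A_i \to B_i)$ with $I : \UU$ and each $A_i, B_i : \UU$, whose $f$-local types in $\UU$ are exactly the $L$-local types. Unfolding the hypothesis that sets cover in $\UU$, applied to the indexing type $I : \UU$, gives a 0-type $S : \UU$ together with an epimorphism $g : S \to I$.

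From this data I would form the reindexed family $f \circ g : \prod_{s:S}(A_{g(s)} \to B_{g(s)})$. It is indexed by the set $S : \UU$, and each $A_{g(s)}$ and $B_{g(s)}$ lies in $\UU$, so it is a family in $\UU$ indexed by a set, as required. By \cref{le:accessible-pullback}, a type $X$ is $f$-local if and only if it is $(f \circ g)$-local, so the $(f \circ g)$-local types and the $L$-local types coincide in $\UU$. This yields the desired set-indexed presentation of $L$.

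There is no real obstacle here: once the indexing type $I$ is replaced by its 0-type cover $S$, \cref{le:accessible-pullback} does all the work. The only subtlety worth flagging is the opening move, namely observing that the conclusion is a mere proposition so that we may eliminate the propositional truncations coming from accessibility of $L$ and from sets-cover in $\UU$, and thereby work with concrete choices of $f$, $S$, and $g$ in the body of the argument.
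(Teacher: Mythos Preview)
Your proof is correct and follows the same approach as the paper: reindex the given family $f$ along a set cover $g : S \to I$ and apply \cref{le:accessible-pullback}. The paper's argument is essentially a terser version of what you wrote, with the same reduction to \cref{le:accessible-pullback} and the same observation that we may unwrap truncations because the conclusion is a mere proposition.
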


\begin{proof}
Suppose $L$ is presented as localization with respect to a family $f$ indexed by a type $I : \UU$.
Since we are proving a proposition, we can
choose a $(-1)$-connected map $g : S \to I$ from a set $S$ in $\UU$.
Then $L$ can be presented as localization with respect to the family $f \circ g$
indexed by $S$, using \cref{le:accessible-pullback}.
\end{proof}

Next we consider reducing from a set-indexed family to a single map.
Given a family $f : \Pi_{i : I} A_i \to B_i$ of maps indexed by a type $I$,
write $\tot(f) : (\Sigma_i A_i) \to (\Sigma_i B_i)$ for the induced map
on $\Sigma$-types.
While every $f$-local type $X$ is also $\tot(f)$-local, the converse is
only true under some assumptions.

\begin{prop}\label{pr:inhab}
Let $f : \Pi_{i:I} A_i \to B_i$ be a family of maps.
Then every $f$-local type is $\tot(f)$-local.
If $I$ is a set with decidable equality and $X$ is merely inhabited,
then $X$ is $f$-local if and only if $X$ is $\tot(f)$-local.
\end{prop}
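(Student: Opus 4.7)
The plan is to use currying in the forward direction and to exhibit $(f_{i_0})^*$ as a retract of $\Pi_{i:I}(f_i)^*$ in the converse. For the forward direction, the canonical currying equivalences $X^{\Sigma_{i:I}A_i} \simeq \Pi_{i:I} X^{A_i}$ and $X^{\Sigma_{i:I}B_i} \simeq \Pi_{i:I}X^{B_i}$ convert precomposition with $\tot(f)$ into the product map $\Pi_{i:I}(f_i)^*$. If each $(f_i)^*$ is an equivalence, then so is the product, so $X$ is $\tot(f)$-local; no hypothesis on $I$ or on $X$ is needed.

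For the converse, assume $I$ is a set with decidable equality, $X$ is merely inhabited, and $X$ is $\tot(f)$-local. Since $f$-locality is a proposition, I may assume a point $x_0 : X$. Fix $i_0 : I$; the goal is to show that $(f_{i_0})^*$ is a retract of $\Pi_{i:I}(f_i)^*$, which by the forward direction together with the currying equivalences is an equivalence. Build a section $s_A : X^{A_{i_0}} \to \Pi_{i:I}X^{A_i}$ of evaluation at $i_0$ as follows: for $\alpha : A_{i_0} \to X$ and $i : I$, case on the decision of $i = i_0$; on $\mathsf{inl}(p)$ set $s_A(\alpha)(i) \defeq \alpha \circ \mathsf{transport}^A(p)$, and on $\mathsf{inr}$ set it to $\const_{x_0}$. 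Because $I$ is a set, the decision at $i_0$ is forced to be $\mathsf{inl}(\refl)$, so $s_A(\alpha)(i_0) = \alpha$; define $s_B$ analogously.

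The right retract square $r_A \circ \Pi_i(f_i)^* = (f_{i_0})^* \circ r_B$, where $r_A, r_B$ are the two evaluations at $i_0$, is immediate: both sides send $\tilde\gamma$ to $\tilde\gamma(i_0) \circ f_{i_0}$. The left square $\Pi_i(f_i)^* \circ s_B = s_A \circ (f_{i_0})^*$ is the main verification: check componentwise at each $i : I$, using the same decidable-equality decision on both sides; on the positive branch with path $p$, equality reduces to $\mathsf{transport}^B(p) \circ f_i = f_{i_0} \circ \mathsf{transport}^A(p)$, which is the transport-naturality of $f$ as a dependent function and is immediate by path induction on $p$; on the negative branch both sides are $\const_{x_0}$. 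Since retracts of equivalences are equivalences, $(f_{i_0})^*$ is an equivalence. The main subtlety is precisely this left square: coordinating the two parallel case analyses for $s_A$ and $s_B$ so that they line up modulo naturality of $f$.
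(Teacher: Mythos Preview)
Your proof is correct and follows essentially the same approach as the paper: currying in the forward direction, and in the converse exhibiting $(f_{i_0})^*$ as a retract of $\Pi_i (f_i)^*$ via sections that use decidable equality to pick out the $i_0$-component and the point $x_0$ elsewhere. You give more detail than the paper on the transport-naturality verification of the left square, but the strategy is identical.
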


\begin{proof}
Suppose $X$ is $f$-local.  To show that $X$ is $\tot(f)$-local, we must
show that the natural map
\[
  ((\sum_i A_i) \to X) \longleftarrow ((\sum_i B_i) \to X)
\]
is an equivalence.  This map is equivalent to the natural map
\begin{equation}\label{eq:totflocal}
  (\prod_i X^{A_i}) \longleftarrow (\prod_i X^{B_i}) ,
\end{equation}
which is an equivalence since each component is.

Now assume that $I$ is a set with decidable equality and
that $X$ is merely inhabited and $\tot(f)$-local.
Then \eqref{eq:totflocal} is an equivalence.
Let $j : I$.
We must show that the map 
\begin{equation}\label{eq:flocal}
X^{A_j} \leftarrow X^{B_j}
\end{equation}
is an equivalence.
Since this is a proposition, we can assume that we have $x_0 : X$.
We proceed by showing that \eqref{eq:flocal} is a retract of \eqref{eq:totflocal}.
There are natural projection maps from \eqref{eq:totflocal} to \eqref{eq:flocal}
given by evaluation at $j$.
Under our assumptions, these have sections.
For example, we define $X^{A_j} \to \prod_i X^{A_i}$ by sending $f$ to the
function sending $i$ to $f$ when $i = j$ and to the constant map at $x_0$ otherwise.
We define a similar map from $X^{B_j}$.
These are sections of the evaluation maps and make the necessary square commute,
showing that \eqref{eq:flocal} is a retract of \eqref{eq:totflocal}.
\end{proof}

\begin{prop}
Let $f : \Pi_{i:I} A_i \to B_i$ be a family of maps indexed by a set $I$
such that the empty type is $f$-local.
Assuming the law of excluded middle for propositions (LEM), the $f$-local types and
the $\tot(f)$-local types agree.
\end{prop}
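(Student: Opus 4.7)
The plan is to deduce this from \cref{pr:inhab} via a case split on whether $X$ is merely inhabited, using the new hypothesis that the empty type is $f$-local to handle the empty case.

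The forward direction, that every $f$-local type is $\tot(f)$-local, was already established in the first half of \cref{pr:inhab} without any extra assumptions, so I only need to prove the converse. Suppose $X$ is $\tot(f)$-local, and apply LEM to the mere proposition $\|X\|_{-1}$. In the inhabited case, I want to invoke the second half of \cref{pr:inhab}, for which I need $I$ to have decidable equality; this follows from LEM since $I$ is a set, so each $i = j$ is a proposition and therefore decidable. Hence $X$ is $f$-local. In the other case, $\lnot \|X\|_{-1}$, which implies $X \to \emptyset$ since $\emptyset$ is already a proposition. The evident maps then exhibit $X \simeq \emptyset$ (one composition is the unique map $\emptyset \to \emptyset$, and the other homotopy holds trivially because its domain is empty). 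Since $\emptyset$ is assumed $f$-local and $f$-locality is preserved by equivalences, $X$ is $f$-local.

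The argument is essentially bookkeeping, and I do not anticipate any real obstacle. The one point worth a second glance is extracting decidable equality on $I$ from LEM in order to apply \cref{pr:inhab} in the inhabited case; the $f$-locality of $\emptyset$ is precisely what is needed to salvage the non-inhabited case that \cref{pr:inhab} left untreated.
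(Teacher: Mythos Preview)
Your proposal is correct and follows essentially the same approach as the paper: apply LEM to $\|X\|_{-1}$, use \cref{pr:inhab} (with decidable equality on $I$ coming from LEM) in the inhabited case, and conclude $X \simeq \emptyset$ in the other case to invoke the hypothesis that $\emptyset$ is $f$-local.
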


Note that if $A_i$ and $B_i$ are merely inhabited for each $i$, then the
empty type is $f$-local.

\begin{proof}
By \cref{pr:inhab}, every $f$-local type is $\tot(f)$-local.
We must prove the converse.
Suppose that $X$ is $\tot(f)$-local.
By LEM applied to $\|X\|_{-1}$, we know that either $X$ is merely inhabited or 
we have $\|X\|_{-1} \to \emptyset$.
If $X$ is merely inhabited, then \cref{pr:inhab} shows that $X$ is $f$-local.
(Since we are assuming LEM, $I$ has decidable equality.)

On the other hand, if we have $\|X\|_{-1} \to \emptyset$, then we have
$X \to \emptyset$ which implies that $X = \emptyset$.
Therefore, $X$ is $f$-local, as required.
\end{proof}

Combining these propositions gives:

\begin{thm}\label{th:single-map}
If sets cover in $\UU$, the law of excluded middle holds and $L$ is an accessible
reflective subuniverse such that the empty type is $L$-local,
then $L$ can merely be presented as localization with respect to a single map. \qed
\end{thm}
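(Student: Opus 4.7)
The plan is to chain together the three results of this subsection. Since the conclusion is itself a mere proposition (``merely presented'' being a propositional truncation), I can freely introduce and destruct the various mere existentials along the way.

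First, since $L$ is accessible, I obtain (merely) a family $f : \prod_{i:I} (A_i \to B_i)$ with $I : \UU$ and each $A_i, B_i : \UU$, such that the $L$-local types are exactly the $f$-local types. Using the hypothesis that sets cover in $\UU$, the first proposition of this subsection lets me replace $I$ by a 0-type $S : \UU$, so that $L$ is presented as localization with respect to the set-indexed family $f \circ g : \prod_{s:S} (A_{g(s)} \to B_{g(s)})$ for some epimorphism $g : S \to I$. Note that the empty type is still $(f \circ g)$-local, because $L$-local types are preserved through these equivalent presentations.

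Next, I want to apply the preceding proposition (the one invoking LEM) to the set-indexed family $f \circ g$. This requires two inputs: that $S$ has decidable equality, and that the empty type is $(f \circ g)$-local. The first follows from LEM applied to identity types of the set $S$, and the second was just verified. The proposition then gives that the $(f \circ g)$-local types coincide with the $\tot(f \circ g)$-local types in $\UU$. Since $\tot(f \circ g)$ is a single map between types in $\UU$ (the sigma types $\sum_{s : S} A_{g(s)}$ and $\sum_{s : S} B_{g(s)}$ are in $\UU$ because $S, A_{g(s)}, B_{g(s)}$ all are), this exhibits $L$ as localization with respect to one map.

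There is no genuine obstacle here beyond bookkeeping: all the content is in the three propositions already proved. The mildly subtle steps are tracking that the ``empty type is $L$-local'' hypothesis transports through the presentation changes (it is intrinsic to $L$, so this is automatic), and recalling that LEM yields decidable equality on any set.
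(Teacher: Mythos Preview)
Your proof is correct and matches the paper's approach exactly: the paper merely writes ``Combining these propositions gives'' and places a \qed, and you have spelled out that combination in detail. (One cosmetic point: the LEM proposition you invoke takes as hypotheses only that the index is a set and that the empty type is local---decidable equality is derived inside its proof from LEM---so listing it as a separate input is redundant but harmless.)
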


\begin{rem}\label{re:single-map}
By \cref{re:local-types},
any localization $L$ produced by \cref{th:non-accessible} with $n \geq 0$
has the property that $\emptyset$ (and all propositions) are $L$-local.
So, if sets cover in $\UU$, the law of excluded middle holds
and $L$ is accessible,
then there merely exists a presentation using a single map.
\end{rem}

\subsection{Properties of the simplicial model}\label{ss:sSet}

In this section, we show that the simplicial model~\cite{KL3} satisfies a strong
form of the axiom of choice, which implies in particular that sets cover
and that the law of excluded middle holds.
These results are known to others.
The reference~\cite{KL2} proves that the law of excluded middle holds in
the simplicial model, but we know of no references for the other facts,
so we include the proofs here to be self-contained.

Given a fibration $f : X \to \Gamma$ of simplicial sets and a vertex $v$ of $\Gamma$,
we write $v : 1 \to \Gamma$ for the vertex inclusion and
$v^* f$ for the fibre $f^{-1}(v)$ of $f$ over $v$.
We say that a fibration or fibrant object is $k$-connected or $k$-truncated if it
satisfies the interpretation of the homotopy type theory definition given in
\cref{de:connected}.

\begin{lem}\label{le:agree1}
A fibrant simplicial set $X$ is $(-1)$-truncated (resp.\ $(-1)$-connected)
if and only if it is empty or contractible (resp.\ non-empty).
\end{lem}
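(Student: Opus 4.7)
The plan is to prove the two equivalences separately by unwinding the interpretation of the HoTT definitions in the Kapulkin--Lumsdaine model of the preceding lemmas' setting.

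For the $(-1)$-truncated statement, the $(\Leftarrow)$ direction is routine: the empty simplicial set vacuously satisfies every universally quantified statement over its vertices and hence interprets a mere proposition, while a contractible Kan complex is weakly equivalent to $1$, whose path spaces are all contractible, so it is $(-1)$-truncated. For the $(\Rightarrow)$ direction, suppose $X$ is $(-1)$-truncated and case-split on $X_0$. If $X_0 = \emptyset$, then since every $n$-simplex has vertices as iterated faces we have $X = \emptyset$. Otherwise pick a vertex $x_0$; the interpretation of ``mere proposition'' translates to the statement that the path space from $x_0$ to every other vertex $y$ is contractible, which forces the inclusion $\{x_0\} \hookrightarrow X$ to be a trivial cofibration, and hence $X$ to be contractible.

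For the $(-1)$-connected statement, I would unwind the definition: $X$ is $(-1)$-connected precisely when $\|X\|_{-1}$ is contractible. The simplicial interpretation of propositional truncation gives $\|X\|_{-1} \simeq \emptyset$ when $X = \emptyset$ and $\|X\|_{-1} \simeq 1$ otherwise, so $\|X\|_{-1}$ is contractible if and only if $X$ is non-empty.

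The main obstacle is the careful bookkeeping that translates internal HoTT notions (mere proposition, propositional truncation) into their external simplicial meanings (contractibility of path spaces, fibrant replacement landing in $\{\emptyset,1\}$ up to equivalence). These identifications are standard in the simplicial model, so once they are in hand, the lemma reduces to the dichotomy based on whether $X_0$ is empty.
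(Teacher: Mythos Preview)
Your proposal is correct and follows the same approach as the paper, which simply states that ``in the empty context the definitions specialize to `empty or contractible' and `non-empty','' leaving the verification to the reader. You have supplied exactly that verification: case-splitting on whether $X_0$ is empty, and translating the internal notions of mere proposition and propositional truncation into their external simplicial meanings. One minor imprecision: the interpretation of ``mere proposition'' actually gives contractibility of \emph{all} path spaces $\mathsf{Path}_X(x,y)$, not just those with one endpoint fixed at $x_0$; but the weaker statement you extract is of course a consequence and suffices for the contractibility argument.
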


\begin{proof}
It is straightforward to check that in the empty context the definitions
specialize to ``empty or contractible'' and ``non-empty,'' respectively.
\end{proof}

\begin{lem}\label{le:agree}
Let $f : X \to \Gamma$ be a fibration between fibrant simplicial sets.
If $f$ is $(-1)$-truncated (resp.\ $(-1)$-connected),
then for each vertex $v$ of $\Gamma$,
the fibre $v^* f$ is empty or contractible (resp.\ non-empty).
\end{lem}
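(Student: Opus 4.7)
The plan is to reduce the statement on the total map $f$ to the statement on each fibre, and then invoke \cref{le:agree1}.

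First I would unpack what it means, in the simplicial model, for $f \colon X \to \Gamma$ to be a $(-1)$-truncated or $(-1)$-connected fibration. By the definition of truncatedness and connectedness interpreted via the Kapulkin--Lumsdaine semantics of~\cite{KL3}, this condition is precisely that the displayed type $\fib_f(y)$, depending on a generic variable $y : \Gamma$, is interpreted as a fibrewise $(-1)$-truncated (resp.\ $(-1)$-connected) fibration over $\Gamma$. Concretely, the fibre-forming type $\sum_{x:X} (f(x) = y)$ over $\Gamma$ is fibrewise equivalent to $f$ itself via the standard equivalence, so the condition on $f$ is just that $f$, viewed as a fibration over $\Gamma$, is fibrewise $(-1)$-truncated or fibrewise $(-1)$-connected.

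Next I would specialize to a vertex. Given a vertex $v \colon 1 \to \Gamma$, pulling back a fibrewise $(-1)$-truncated (resp.\ $(-1)$-connected) fibration along $v$ yields a $(-1)$-truncated (resp.\ $(-1)$-connected) fibrant simplicial set, since the interpretations of truncatedness and connectedness are stable under pullback (they are defined by pullback from the generic fibre). Thus $v^{*}f$ is a $(-1)$-truncated (resp.\ $(-1)$-connected) fibrant simplicial set.

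Finally, \cref{le:agree1} immediately translates this property of $v^{*}f$ into the stated conclusion: $v^{*}f$ is empty or contractible in the truncated case, and non-empty in the connected case.

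The only slightly delicate step is the first one, where one has to appeal to the interpretation of the HoTT definition of a $k$-truncated or $k$-connected map as a fibrewise condition on the corresponding fibration; this is standard but requires being careful that the definition of $\fib_f$ in the empty context over $\Gamma$ really reduces, at a vertex $v$, to the honest simplicial fibre $v^{*}f$ up to equivalence. Once this is granted, the rest is a one-line pullback argument followed by an application of \cref{le:agree1}.
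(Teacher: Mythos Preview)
Your proposal is correct and follows essentially the same approach as the paper: the paper's proof is the one-liner ``this follows immediately from \cref{le:agree1}, since being $(-1)$-truncated (resp.\ $(-1)$-connected) is stable under pullback.'' Your version simply spells out in more detail why stability under pullback holds in the simplicial model and why the pulled-back condition over a vertex really is the hypothesis of \cref{le:agree1}.
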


\begin{proof}
This follows immediately from \cref{le:agree1}, since being
$(-1)$-truncated (resp.\ $(-1)$-connected) is stable under pullback.
\end{proof}

\begin{prop}\label{pr:empty}
Let $p : P \to \Gamma$ be a $(-1)$-truncated fibration between fibrant simplicial sets.
Then $p$ has a section if and only if the fibre $v^* p$ is
non-empty for each vertex $v$ of $\Gamma$.
\end{prop}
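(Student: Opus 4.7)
The plan is to handle the two directions separately. The forward direction is immediate: given a section $s : \Gamma \to P$, the $0$-simplex $s(v)$ witnesses the non-emptiness of the fibre $v^* p$ for each vertex $v$ of $\Gamma$.

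For the converse, I would first apply \cref{le:agree} to conclude that every fibre of $p$ is either empty or contractible; under the non-emptiness hypothesis, each fibre is therefore a contractible Kan complex (note that the fibres are fibrant because $p$ is a fibration and $1$ is fibrant). The strategy is then to upgrade $p$ from a Kan fibration to a \emph{trivial} Kan fibration, i.e.\ one that is simultaneously a weak equivalence. Once this is established, trivial fibrations have the right lifting property against all monomorphisms, so applying this property to the monomorphism $\emptyset \hookrightarrow \Gamma$ with $\id_\Gamma$ along the bottom produces a diagonal filler, which is precisely a section of $p$.

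The main obstacle is the classical fact that a Kan fibration between Kan complexes whose fibres are all contractible is automatically a weak equivalence. I would prove this using the long exact sequence of homotopy groups associated to the fibre sequence $v^* p \to P \to \Gamma$ at each choice of basepoint: contractibility of each $v^* p$ forces $p$ to induce isomorphisms on all $\pi_n$ for $n \geq 1$ and a bijection on $\pi_0$ (surjectivity uses non-emptiness of every fibre, injectivity uses path-connectedness of every fibre). Alternatively, one can verify the RLP of $p$ against each boundary inclusion $\partial \Delta^n \hookrightarrow \Delta^n$ by induction on $n$: given a partial lift on $\partial \Delta^n$ over a simplex $\sigma : \Delta^n \to \Gamma$, the Kan fibration property combined with contractibility of the relevant fibre reduces the extension problem to filling an $(n-1)$-sphere inside a contractible Kan complex, which always succeeds. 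Either route yields that $p$ is a trivial Kan fibration, completing the proof.
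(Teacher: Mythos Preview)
Your proposal is correct and follows essentially the same route as the paper: use \cref{le:agree} to see that the fibres are contractible, conclude that $p$ is a trivial fibration, and hence obtain a section. The only difference is that the paper simply invokes the classical fact ``Kan fibration with contractible fibres $\Rightarrow$ trivial fibration'' without further comment, whereas you spell out two standard arguments for it.
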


\begin{proof}
By \cref{le:agree}, the fibres $v^* p$ of $p$ are empty or contractible.
Suppose that $v^* p$ is non-empty for each vertex $v$.
Then these fibres must be contractible.
This implies that $p$ is a trivial fibration%
\footnote{Citations for the fact that $p$ is a trivial fibration
are~\cite[Proposition 8.23]{J} and~\cite[Lemma~5.4.16 in v2]{RV}.}
and therefore has a section.
The converse is trivial.
\end{proof}

This lets us prove the converse of \cref{le:agree}.

\begin{prop}\label{pr:agree}
Let $f : X \to \Gamma$ be a fibration between fibrant simplicial sets.
Then $f$ is $(-1)$-truncated (resp.\ $(-1)$-connected)
if and only if for each vertex $v$ of $\Gamma$,
the fibre $v^* f$ is empty or contractible (resp.\ non-empty).
\end{prop}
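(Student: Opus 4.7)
The forward implication is \cref{le:agree}, so my plan is to establish the two converses.

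For the $(-1)$-connected statement, I will argue that if every fibre $v^*f$ is non-empty then every vertex of $\Gamma$ already lies in the image of $f_0$, so $\pi_0 f$ is surjective. Since a map of fibrant simplicial sets is $(-1)$-connected precisely when it induces a surjection on $\pi_0$, this suffices.

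For the $(-1)$-truncated statement, my plan is to reduce the claim to showing that the first projection $\pi_1 : X \times_\Gamma X \to X$ is a weak equivalence. This reduction relies on the fact that $f$ is $(-1)$-truncated if and only if the fibrewise diagonal $\Delta_f : X \to X \times_\Gamma X$ is an equivalence; combining this with the retraction identity $\pi_1 \circ \Delta_f = \id_X$ and 2-out-of-3 yields the desired equivalent formulation. I then observe that $\pi_1$, as a pullback of $f$ along itself, is a fibration between fibrant simplicial sets, and that its fibre over any vertex $x \in X_0$ is $f^{-1}(f(x))$, which contains $x$ and so is non-empty; by hypothesis it is therefore contractible. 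The final step will be to invoke the standard fact, essentially the one used inside the proof of \cref{pr:empty}, that a Kan fibration with contractible fibres over every vertex is a trivial fibration, and in particular a weak equivalence.

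The main obstacle will not be the combinatorial manipulations but rather the bridge between the internal HoTT statement ``$f$ is $(-1)$-truncated'' and its external manifestation as ``$\Delta_f$ is a weak equivalence''; once this correspondence is accepted, the rest is a repackaging of \cref{le:agree1} and \cref{pr:empty}.
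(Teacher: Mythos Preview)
Your truncated argument is sound but different from the paper's: the reduction ``$f$ is $(-1)$-truncated $\Leftrightarrow$ $\Delta_f$ is an equivalence'' is a theorem of HoTT, and the further passage to ``$\Delta_f$ is a weak equivalence'' only requires the foundational fact that internal equivalences in the simplicial model coincide with external weak equivalences between fibrant objects.  After that, showing $\pi_1$ is a trivial fibration via its contractible vertex fibres is exactly the mechanism inside \cref{pr:empty}.

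Your connected argument, however, is circular.  You invoke the claim that ``a map of fibrant simplicial sets is $(-1)$-connected precisely when it induces a surjection on $\pi_0$'', but since the paper \emph{defines} $(-1)$-connected as the interpretation of the HoTT predicate, that equivalence is precisely the content of \cref{pr:agree} (the passage between ``$\pi_0 f$ surjective'' and ``all vertex fibres non-empty'' is elementary for fibrations and does not help).  Unlike the truncated case, there is no single map whose being an equivalence expresses $(-1)$-connectedness, so you cannot reduce to the foundational equivalence bridge you used there.

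The paper avoids this asymmetry by treating both cases uniformly via \cref{pr:empty}.  The internal assertion ``$f$ is $(-1)$-connected'' unfolds to the existence of a section of the fibration $C \to \Gamma$ interpreting $\Sigma_y\,\|\fib_f(y)\|_{-1}$.  This fibration is $(-1)$-truncated (its fibres are propositions), and since the interpretation commutes with pullback to vertices, its vertex fibres are non-empty exactly when each $v^* f$ is; \cref{pr:empty} then yields the section.  The $(-1)$-truncated case is handled identically with $C$ interpreting $\Sigma_y\,\mathsf{isProp}(\fib_f(y))$.  Your approach for the truncated half could be rewritten to match this pattern, and doing so for the connected half would close the gap.
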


\begin{proof}
One direction is given by \cref{le:agree}.
For the converse, suppose that each $v^* f$ is non-empty.
The condition that $f$ be a $(-1)$-connected map says that a
certain fibration $C \to \Gamma$ has a section, where $C$ is the
interpretation of $\Sigma_y \|\fib_f (y)\|_{-1}$.
Since $C \to \Gamma$ is a family of mere propositions, the map
$C \to \Gamma$ is $(-1)$-truncated.
On the other hand, the assumption that each $v^* f$ is non-empty
tells us that the fibres of $C \to \Gamma$ are non-empty,
since the interpretation of $C$ commutes with pullback to each vertex.
So it follows from \cref{pr:empty} that $C \to \Gamma$ has a section.

A similar argument works for $(-1)$-truncated maps.
\end{proof}

We will now see how these propositions let us reduce many questions to the empty context.

\medskip

For $-1 \leq n \leq \infty$, let $\AC_n$ be the statement that
for every $0$-type $X$ in some context $\Gamma$, every family of
merely inhabited $n$-types over $X$ merely has a section.
When $n = \infty$, there is no truncation condition on the family.
This is $\AC_{n,-1}$ from~\cite[Exercise~7.8]{Uni}, but phrased
in a way that avoids universes.
The following result is mentioned in that exercise, without proof.

\begin{thm}\label{th:AC}
The simplicial model satisfies $\AC_{\infty}$.
More precisely, for every $0$-truncated fibration $p : X \to \Gamma$
and every $(-1)$-connected fibration $e : Y \to X$,
the $(-1)$-truncation of the fibration $\Pi_p e$ in the slice over $\Gamma$
has a section.
\end{thm}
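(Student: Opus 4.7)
The plan is to reduce the claim to a statement in the empty context using \cref{pr:empty}, and then to construct a section by hand in $\sSet$ using metatheoretic choice and the lifting properties of the Kan--Quillen model structure.

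By construction, $q : \|\Pi_p e\|_{-1} \to \Gamma$ is a $(-1)$-truncated fibration between fibrant simplicial sets, so \cref{pr:empty} says that it admits a section if and only if each fibre $v^* q$ is non-empty for every vertex $v$ of $\Gamma$. The propositional truncation (the $(-1)$-image factorization) is stable under pullback, and $\Pi_p$ satisfies Beck--Chevalley with respect to $v^*$ since $\sSet$ is a topos. Hence $v^* q \simeq \|\Pi_{v^* p}(v^* e)\|_{-1}$, which is the $(-1)$-truncation of the simplicial set whose vertices are sections of $v^* e : v^* Y \to v^* X$. Non-emptiness of $v^* q$ therefore amounts to the existence of a section of $v^* e$ in $\sSet$.

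This reduces the theorem to the following claim about $\sSet$: if $X$ is a $0$-truncated Kan complex and $e : Y \to X$ is a $(-1)$-connected Kan fibration, then $e$ admits a section. Each path component of such an $X$ is a Kan complex with trivial higher homotopy, hence weakly contractible, so $X = \coprod_{s \in \pi_0(X)} X_s$ is a disjoint union of contractible Kan complexes; setting $Y_s \defeq e^{-1}(X_s)$ yields a compatible decomposition $Y = \coprod_s Y_s$ with each $Y_s \to X_s$ a Kan fibration. By \cref{pr:agree}, every fibre of $e$ is non-empty. Using the axiom of choice in the metatheory, for each $s$ I would pick a vertex $x_s \in X_s$ together with a vertex $y_s \in Y_s$ above $x_s$. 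The monomorphism $\{x_s\} \hookrightarrow X_s$ between contractible objects is an acyclic cofibration, so the Kan--Quillen lifting property produces a lift $X_s \to Y_s$ of $y_s$ along the fibration $Y_s \to X_s$, i.e., a section. Assembling these across $s$ gives the desired global section of $e$.

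The main subtlety lies in making the reduction rigorous: one must carefully invoke Beck--Chevalley and pullback stability of the $(-1)$-connected/$(-1)$-truncated factorization system in order to identify $v^* q$ with the fibrewise $(-1)$-truncation. Once the reduction is in place, the construction in the empty context is essentially a routine application of the Kan model structure together with metatheoretic $\AC$; the latter is precisely what allows the simplicial model to internally validate $\AC_\infty$ over ZFC.
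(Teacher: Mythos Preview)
Your proof is correct and follows essentially the same approach as the paper: reduce to the empty context via Beck--Chevalley and pullback stability of the $(-1)$-truncation, then use external choice to pick a point over each component of the $0$-truncated base and the lifting property of acyclic cofibrations against Kan fibrations to extend to a section. The only cosmetic difference is that the paper packages all components at once via a single trivial cofibration $S \hookrightarrow v^* p$ from a discrete set of chosen vertices, while you argue componentwise; these are equivalent.
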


\begin{proof}
Let $p$ and $e$ be as in the statement.
We need to show that the $(-1)$-truncation of the fibration $\Pi_p e$
has a section.
In other words, we need to show that $\Pi_p e$ is a $(-1)$-connected map.
By \cref{pr:agree}, it is enough to show that for each vertex $v$
of $\Gamma$, the preimage $v^*(\Pi_p e)$ is non-empty.
Since $\Pi$ commutes with pullbacks, this preimage is equivalent
to $\Pi_{v^* p} v^* e$, which puts us in the empty context.
Here $v^* e$ denotes the functorial action of pullback, giving
a $(-1)$-connected fibration over a homotopically
discrete simplicial set $v^* p$.
Using the external axiom of choice, choose one vertex of each component
to obtain a trivial cofibration $S \to v^* p$ from a discrete simplicial set.
Using choice again, we can lift this map as shown, since $v^* e$ is $(-1)$-connected:
\[
  \begin{tikzcd}
    S \arrow[r,dashed] \arrow[d,swap,] & v^*(e \circ p) \arrow[d,"v^* e"] \\
    v^* p \arrow[r,equal] & v^* p .
  \end{tikzcd}
\]
Finally, we use the lifting property of trivial cofibrations and fibrations
to obtain the desired section of $v^* e$.
\end{proof}

As another example, we give an alternate proof of the following result.

\begin{prop}[{\cite{KL3}}]\label{pr:LEM}
The law of excluded middle holds in the simplicial model.
That is, for every $(-1)$-truncated fibration $q : A \to \Gamma$ of simplicial sets,
the fibration $p : P \to \Gamma$ corresponding to $\LEM(A) \defeq A + \lnot A$ has a section.
\end{prop}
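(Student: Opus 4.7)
The plan is to mimic the structure used in \cref{th:AC}: reduce the global statement (existence of a section of $p$) to a fibrewise statement (each $v^* p$ is non-empty), then check the fibrewise statement in the empty context using \cref{le:agree1}. The key preliminary observation is that $p : P \to \Gamma$ is itself $(-1)$-truncated, because for any mere proposition $Q$ the type $Q + \neg Q$ is again a proposition, and $\LEM(A)$ is interpreted fibrewise. Once this is established, \cref{pr:empty} tells us that a section of $p$ exists as soon as we check that $v^* p$ is non-empty for every vertex $v$ of $\Gamma$.

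Next I would compute $v^* p$. Since pullback commutes with the type-theoretic constructions used to build $P$ (coproducts commute with pullback in $\sSet$, and $\Pi$ commutes with pullback, so in particular $v^*(\neg A) \simeq \neg(v^* A)$), we obtain $v^* p \simeq v^* A + \neg\, v^* A$, placing us in the empty context over the base. By hypothesis $q$ is $(-1)$-truncated, so by \cref{le:agree1} applied in the empty context, $v^* A$ is either empty or contractible. If it is empty, the right summand $\neg\, v^* A$ is inhabited (by the unique function out of $\emptyset$), so $v^* p$ is non-empty. If it is contractible, pick its centre to inhabit the left summand.

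Putting the pieces together, each fibre $v^* p$ is non-empty, so by \cref{pr:empty} the $(-1)$-truncated fibration $p$ admits a section, as required. I do not anticipate a serious obstacle here: the one point that needs a brief verification is that the interpretation of $A + \neg A$ genuinely pulls back vertexwise to $v^* A + \neg\, v^* A$, but this is standard stability of $+$ and $\Pi$ under pullback in the simplicial model and parallels the use of pullback-stability for $\Sigma$ and $\|{-}\|_{-1}$ already exploited in the proof of \cref{pr:agree}.
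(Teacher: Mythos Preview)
Your proposal is correct and follows essentially the same approach as the paper: observe that $p$ is $(-1)$-truncated, apply \cref{pr:empty} to reduce to showing each fibre $v^* p$ is non-empty, use pullback-stability to identify $v^* p$ with $v^* A + \lnot\, v^* A$, and then check this in the empty context. The only minor difference is that the paper's last step is slightly more general---it notes that for \emph{any} fibrant simplicial set $X$, the set $X + \lnot X$ has a global element since $X$ is either empty or non-empty---so it does not need to invoke the $(-1)$-truncatedness of $v^* A$ (via \cref{le:agree} rather than \cref{le:agree1}, incidentally) at that point.
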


\begin{proof}
Note that $p$ is $(-1)$-truncated, since $\LEM(A)$ is a mere proposition when $A$ is.
So, by \cref{pr:empty},
it is enough to show that $v^* p$ is non-empty for each vertex $v$ of $\Gamma$.
Since LEM commutes with pullback, $v^* p$ is equivalent to $\LEM(v^* q)$.
In other words, we have reduced the problem to the empty context.
But for any fibrant simplicial set $X$, $X + \lnot X$ has a global element,
since $X$ is either non-empty or empty.
\end{proof}

The previous result also follows from Diaconescu's Theorem~\cite[Theorem~10.1.14]{Uni}
and \cref{th:AC}, since $\AC_{\infty}$ clearly implies $\AC_0$.

\begin{cor}\label{co:sets-cover}
For any universe $\UU$ in the simplicial model, sets cover in $\UU$.
\end{cor}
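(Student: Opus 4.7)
The plan is to imitate the proof of \cref{pr:LEM}, reducing to the empty context via \cref{pr:empty}. Given $X : \UU$ in a context $\Gamma$, realized as a fibration $X \to \Gamma$, the statement that there merely exists a $\UU$-small $0$-type $S$ together with an epimorphism $S \to X$ is a mere proposition, and so corresponds to a $(-1)$-truncated fibration $p : P \to \Gamma$. All ingredients of this proposition (the universe $\UU$, $\Sigma$-types, the predicate ``is a $0$-type'', propositional truncation, and surjectivity) are pullback-stable, so for each vertex $v$ of $\Gamma$ the fibre $v^* p$ is precisely the interpretation in the empty context of ``$v^* X$ is covered by a $\UU$-small $0$-type''. \cref{pr:empty} thus reduces the problem to the empty context.

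In the empty context, given a $\UU$-small fibrant simplicial set $Y$, I would take $S$ to be the discrete simplicial set on the set of vertices $Y_0$, equipped with the canonical inclusion $g : S \hookrightarrow Y$. Since $\UU$ classifies $\kappa$-small fibrations for some regular $\kappa$ and $Y$ has fewer than $\kappa$ vertices, $S$ is $\UU$-small; being discrete, $S$ is a $0$-type. To see that $g$ is $(-1)$-connected, I would invoke \cref{pr:agree}: for each vertex $y$ of $Y$, the fibre of $g$ over $y$ contains $y \in Y_0 = S$ itself, so is non-empty.

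The main subtlety is simply to confirm that the construction $Y \mapsto Y_0$ preserves $\UU$-smallness and that the predicate defining $p$ is genuinely stable under pullback; both are standard facts about the fibrant universe in the simplicial model. No deeper obstacle is anticipated: the pattern-based reduction to the empty context works exactly as in the LEM proof, and covering a Kan complex by its set of vertices is immediate.
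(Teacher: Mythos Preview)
Your approach is correct but takes a different route from the paper's main proof. The paper argues internally: it invokes Exercise~7.9 of the HoTT book, which shows (within type theory) that $\AC_\infty$ implies sets cover, and combines this with \cref{th:AC}; the covering set produced is $\pi_0(X)$, which lies in the same universe as $X$. The paper does mention your reduction-to-the-empty-context strategy as an alternative in a remark after the proof, but even there it uses $\pi_0(X)$ and invokes the external axiom of choice to build a section $\pi_0(X) \to X$. Your choice of $S = Y_0$ is a genuine simplification over that alternative: the inclusion $Y_0 \hookrightarrow Y$ is already there, so no choice is needed in the empty context. The paper's internal route, on the other hand, has the virtue of isolating a reusable type-theoretic implication ($\AC_\infty \Rightarrow$ sets cover) from the model-specific verification of $\AC_\infty$.

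One technical point you should tighten: your appeal to \cref{pr:agree} is not quite licit, since that proposition is stated for \emph{fibrations} between fibrant simplicial sets, and the inclusion $g : Y_0 \hookrightarrow Y$ is in general not a fibration. The fix is easy: either factor $g$ as a trivial cofibration followed by a fibration and apply \cref{pr:agree} to the fibration part, or simply observe directly that in the empty context a map of Kan complexes is $(-1)$-connected exactly when it is surjective on $\pi_0$, which is immediate for $Y_0 \to Y$ since every path component of a Kan complex contains a vertex.
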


It follows that \cref{re:single-map} applies in the simplicial model.

\begin{proof}
\cite[Exercise~7.9]{Uni} says that $\AC_\infty$ implies that sets cover,
and this has been formalized by Jarl Flaten in~\cite{HoTT}.
The set that covers $X$ can be taken to be $\pi_0(X)$, so it lives
in the same universe as $X$.
So the claim follows from \cref{th:AC}.
\end{proof}

Alternatively, one can use the methods above to show that it
suffices to check this in the empty context.
Then, using choice, one can also find a $(-1)$-connected map
$\pi_0(X) \to X$.
\cref{co:sets-cover} also follows from \cite[Theorem~3.1]{Shu'}, which
relates this to the external notion.
\cite{Shu'} also points out that the external notion fails in some slices
of simplicial sets.  We give a simpler argument here.

\begin{prop}
Let $f : Y \to X$ be a map of simplicial sets with $Y$ non-empty and $X$ $1$-connected.
If there exists a set $g : S \to X$ in the slice over $X$ and a $(-1)$-connected map $e$ from
$g$ to $f$, then $f$ has a section.
Taking $X$ to be any non-trivial $1$-connected space and $f$ to be the inclusion of a point,
we see that the external notion of sets cover fails in the slice over $X$.
\end{prop}

\begin{proof}
Let $f$, $g$ and $e$ be as in the statement.
Since $Y$ is non-empty and $e$ is $(-1)$-connected, $S$ is non-empty.
Since $X$ is $1$-connected, the family $g$ of sets over $X$ is trivial (and non-empty),
and therefore has a section $s$.
Then $f \circ e \circ s = g \circ s = \id_X$, so $e \circ s$ is a section of $f$.
\end{proof}

\medskip

The theme of this section is that many questions can be reduced
to the empty context.
This is fundamentally because of \cref{pr:agree}, which says that $(-1)$-connected
maps are determined by their pullbacks to the terminal object,
and corresponds to the model being ``well-pointed'' in some sense.
Note that not every question can be checked in the empty context.
For example, for any fibrant simplicial set $X$, $\LEM(X)$ holds,
but it does not hold for families.

\section{The relationship to past work}\label{se:pastwork}

\subsection{The local objects}\label{ss:local-objects}

The paper~\cite{CSS} works in the category of simplicial sets.
The local objects in~\cite[Theorem~6.4]{CSS} are the 1-types $X$ such
that for every (small) infinite cardinal $\kappa$ and every $x \in X$,
\[
  \Hom(\Z^{\kappa}/\Z^{<\kappa},\, \pi_1(X,x)) = 1 .
\]
Here, $\Z^{\kappa}$ is the abelian group of all functions $f : \kappa \to \Z$,
and $\Z^{<\kappa}$ is the subgroup consisting of those functions $f$ whose
support has cardinality smaller than $\kappa$.
In homotopy type theory, we can choose a family of groups that gives the
same condition when interpreted in simplicial sets,
where the law of excluded middle holds for propositions~(see~\cite{KL2} and \cref{pr:LEM}).
For example, one could index the family by 0-types $K$ in $\UU$ which are
``infinite'' in some sense, e.g., in that they merely have a self-map that
is injective but not surjective.
For each such $K$, one considers the group $\Z^K/\Z^{<K}$,
where $\Z^K \defeq (K \to \Z)$ and $\Z^{<K}$ could be defined to be the subgroup
generated by those $f$ satisfying
\[
\lnot \, \bigg(\sum_{g \colon \textup{supp}(f) \to K} \, \mathsf{is}\text{-}(-1)\text{-}\mathsf{connected}(g)\bigg).
\]
Variations are possible and we make no claim that these definitions are the
most practical for future work.  We only claim that in the simplicial model,
these produce the same family as used in~\cite{CSS} (with repetition, which is
harmless).

More generally, one can consider a class $S$ of epimorphisms of small groups
and define the $S$-local objects to be the 1-types $X$ such
that for every $f : A \to C$ in $S$ and every $x \in X$,
the natural map
\[
  \Hom(A,\, \pi_1(X,x)) \leftarrow \Hom(C,\, \pi_1(X,x))
\]
is a bijection.
(Taking the class of maps $\Z^{\kappa}/\Z^{<\kappa} \to 1$ reproduces the previous case.)
We can characterize these local objects in the following way, using that
for a 1-type $X$, $\Hom(A, \pi_1(X,x))$ is equivalent to the type $BA \pto (X,x)$
of pointed maps from the classifying space $BA \equiv K(A,1)$ to the pointed
type $(X, x)$:
\begin{align*}
   \{ \text{$S$-local objects} \}
&= \{ \text{1-types } X \mid \prod_{f:A \to C} \, \prod_{x:X} \, (BA \to_* (X,x)) \leftarrow (BC \to_* (X,x)) \text{ is an equiv.}\} \\
&= \{ \text{1-types }X \mid \prod_{f:A \to C} \, (BA \to X) \leftarrow (BC \to X) \text{ is an equiv.}\} \\
&= \{ \text{1-types } X \mid X \text{ is local w.r.t. $\{Bf : BA \to BC \mid f \in S\}$}\}.
\end{align*}
Two of the products are over $f \in S$, and the first two maps are induced by
$Bf : BA \pto BC$.
The second equality is~\cite[Lemma~3.3]{CORS}.
(We are using type-theoretic notation here, but this can all be interpreted classically.)
When $f$ is a surjection of groups, $Bf$ is a $0$-connected map,
so \cref{th:non-accessible} gives a localization onto the types described
in the last displayed line.
(Note that the internal and external homs agree for simplicial sets,
so there is no need to distinguish between ``internally'' and ``externally'' local objects.)

We next discuss the interpretation of this in more detail.

\subsection{The localization}

We will compare the results of this paper with~\cite{CSS} by comparing
what they give in the $\infty$-topos of spaces, i.e., in the simplicial nerve
$N(\Kan)$ of the simplicial category $\Kan$ of Kan complexes.
(This is also called the homotopy coherent nerve.)

The paper~\cite{CSS} was written before the theory of $\infty$-categories
was well-established.
For a class $S$ of surjections of small groups,
they construct a \dfn{homotopy idempotent functor}
$E$ whose local objects are the $S$-local spaces.
More precisely, $E$ is a (strict) functor $\sSet \to \sSet$ which sends
weak equivalences to weak equivalences and is equipped with a (strict) natural
transformation $\eta_X : X \to EX$ such that, for each $X$, $\eta_{EX}$ and $E \eta_X$
are weak equivalences $EX \to E E X$.%
\footnote{%
It is because of this strictness that we compare our results in the $\infty$-topos
of spaces.
It is not clear whether a general localization of the $\infty$-topos of
spaces can be ``strictified'' into a homotopy coherent functor.
Most results along these lines deal with accessible localizations.
}
(They note that it follows that $\eta_{EX}$ and $E \eta_X$ are homotopic,
and that one can assume without loss of generality that $EX$ is a Kan complex
for every simplicial set $X$.)

By~\cite[Corollary~6.5]{RSS'}, one can replace $E$ by a weakly equivalent
\emph{simplicial} functor $\sSet \to \sSet$.
One can then apply the simplicial nerve functor $N$ to obtain an ($\infty$-)functor $E'$
on the $\infty$-topos $N(\Kan)$ of spaces, which gives a
localization in Lurie's sense (see~\cite[Proposition 5.2.7.4]{Lur}).
The essential image of $E'$ consists of the $S$-local Kan complexes.

Now we consider how \cref{th:non-accessible} gives rise to a localization
in Lurie's sense.
There are several subtleties to deal with, and we only give an outline.
Let $\beta < \alpha$ be (strongly) inaccessible cardinals.
By~\cite{KL3}, there is a model of homotopy type theory based on the
category of $\alpha$-small simplicial sets (those $X$ such that each $X_n$
has cardinality less than $\alpha$) and which contains an internal universe
$U_{\beta}$ corresponding to the $\beta$-small simplicial sets.
We regard the $\beta$-small simplicial sets as the usual ``small'' simplicial sets,
and the larger simplicial sets as a technical tool.
(\cite{CSS} also makes use of such larger simplicial sets.)
We take $U_{\beta}$ as our interpretation of the universe $\UU$ appearing
in \cref{th:non-accessible}.

\cref{th:non-accessible} produces a reflective subuniverse of $\UU$.
While we expect that one could interpret this directly as a localization
of the $\infty$-topos of $\beta$-small Kan complexes, this has not been
written down explicitly.
The Appendix of \cite{RSS} instead gives the interpretation of a ``judgmental reflective subuniverse''
that acts on \emph{all} types, not just those in some universe.
The reflective subuniverse $L_{\bar{f}}$ that appears in the proof of
\cref{th:non-accessible} is polymorphic in this sense, as it is accessible.
Therefore, \cite[Theorem~A.12]{RSS} applies and tells us that we get
a reflective subfibration of the $\infty$-topos of $\alpha$-small Kan complexes,
which in particular gives a localization in Lurie's sense.
The localization produced by \cref{th:non-accessible} is then interpreted
as the restriction of this localization to the sub-$\infty$-topos of Kan complexes
which are $\beta$-small (up to equivalence).

As a localization is determined by its essential image, the discussion
of the previous section shows that this localization agrees with the
localization coming from the work of~\cite{CSS}.

\subsection{Accessibility}

By \cref{th:single-map}, \cref{re:single-map} and the results of \cref{ss:sSet},
if the localization produced by \cref{th:non-accessible} is accessible in homotopy type theory,
then it can merely be presented as localization with respect to a single map.
In particular, it would follow that the local spaces described in
\cref{ss:local-objects} are $f$-local for a single map $f$ between
simplicial sets.
This is what~\cite{CSS} show is not provable from ZFC (assuming that ZFC
is consistent), and so it follows that the localization in homotopy type theory
also cannot be proven to be accessible.

Note also that \cite[Theorem~A.20]{RSS} shows that if $L$ is an accessible
reflective subuniverse in homotopy type theory,
then the corresponding reflective subcategory in an $\infty$-topos model is accessible.
This does not itself guarantee that it can be presented using a single map,
but the arguments of \cref{se:single-map} can be carried out directly in
the simplicial model to show that this is the case.

\section{Formalization}\label{se:formalization}

The main results of this paper have been formalized using the
Coq HoTT library~\cite{HoTT} and are available at~\cite{Chr}.
Specifically, at the time of writing, the following results have been formalized:
\cref{pr:smallness},
\cref{le:truncated-type},
\cref{le:truncated-family},
\cref{le:dual-smallness},
\cref{th:characterize-smallness} (both proofs),
\cref{co:dual-smallness},
\cref{re:issmall-inhabited-issmall},
\cref{pr:restrict},
and \cref{th:non-accessible}.
The \texttt{README.md} file at~\cite{Chr} gives the mapping
between the results here and the formalized results and will be updated
as more results are formalized.
After further polishing, the results will be pushed to the main library.

The formalization of the above results takes approximately 500 lines.
In order to do these formalizations, a number of other known results
needed to be formalized.
For example, we prove several results about connected maps,
enough to show that if $f$ is a surjection between groups, then
$Bf$ is a $0$-connected map.
(See \cref{ex:surjective} and~\cite[Corollary~8.8.5]{Uni}.)
We also prove a special case of~\cite[Theorem~3.12]{CORS},
namely that if $O$ is a modality and $L_f$ is the localization
with respect to a family with each $f_i$ $O$-connected,
then each $\eta : X \to L_f X$ is $O$-connected.
This was challenging because~\cite[Lemma~1.44]{RSS} has not been
formalized, so we had to also prove various results about orthogonal
factorization systems.
The results described in this paragraph required about 600 additional lines.

In addition, the $n=0$ case of \cref{pr:smallness},
which was proved informally by Rijke~\cite{R},
is assumed as an axiom in the formalization.
It has been formalized in Arend by Valery Isaev~\cite{Isa}
(see the file \texttt{Homotopy/Image.ard}).

\vspace*{1.5ex}

\end{document}